\newtheorem{theorem}{Theorem}[section]
\newtheorem{def.}[theorem]{Definition}
\newtheorem{cor.}[theorem]{Corollary}
\newtheorem{lemma}[theorem]{Lemma}
\newtheorem{definition}[theorem]{Definition}
\newtheorem{proposition}[theorem]{Proposition}
\newtheorem{observation}[theorem]{Observation}
\theoremstyle{remark}
\newtheorem*{remark}{Remark}
\def\ZZ{{\mathbb{Z}}}
\def\RR{{\mathbb{R}}}
\def\NN{{\mathbb{N}}}
\def\RRd{{\mathbb{R}^d}}
\newcommand{\card}{\hbox{\ensuremath{\#}}}
\begin{document}

\title{Frames by Multiplication%
}
\author{P. Balazs\thanks{
Acoustics Research Institute, Austrian Academy of Sciences,
Wohllebengasse 12-14, 1040 Wien, Austria. E-mail:
peter.balazs@oeaw.ac.at.} \and C. Cabrelli\thanks{Departamento de
Matem\'atica, Facultad de Ciencias Exactas y Naturales,
Universidad de Buenos Aires, Pabell\'on I, Ciudad Universitaria,
C1428EGA C.A.B.A., Argentina, and IMAS, CONICET, Argentina. E-mail:
cabrelli@dm.uba.ar, sheinek@dm.uba.ar, umolter@dm.uba.ar.\newline
For manuscript correspondence: Sigrid Heineken, Departamento de
Matem\'atica, Facultad de Ciencias Exactas y Naturales,
Universidad de Buenos Aires, Pabell\'on I, Ciudad Universitaria,
C1428EGA C.A.B.A., Argentina, tel/fax:+541145763335, E-mail:
sheinek@dm.uba.ar.}
 \and S. Heineken$^{\dag}$ \and U. Molter$^{\dag}$}


\maketitle
\begin{abstract}
In this note we study frame-related properties of a sequence of functions multiplied by another function.
In particular we study frame and Riesz basis properties.
We apply these results to sets of irregular translates of a bandlimited function $h$ in $L^2(\RR^d)$.
This is achieved by looking at a set of exponentials restricted to a set $E \subset \RR^d$ with frequencies in a countable set $\Lambda$ and multiplying it by the Fourier transform of a fixed function
$h \in L^2(E)$.
Using density results due to Beurling, we prove the existence and give ways to construct
frames by irregular translates.

\end{abstract}

{\bf Key words:} Frames, Riesz bases, irregular translates,
irregular sampling

{\bf AMS subject classification:} 42C40.


\section{Introduction}

Signal processing tools and algorithms are central in the
technology of the 21$^{\rm st}$ century. These tools  and
algorithms are used in digital instruments that have become
indispensable in everyday life. There is a wide spectrum of
devices ranging from medical applications to mass consumer
gadgets, such as cameras, Smart Phones, MP3-players or high
resolution TV. They could not exist without the  recent
development of sophisticated tools and techniques.

Signal processing is an area that for over 50 years belonged almost exclusively to engineering. Recently the
digital revolution has produced a considerable increase of the need for more mathematics to tackle difficult problems,
and for the design of new  algorithms and the refinement of existing ones. This created a rich and fruitful  interaction between both fields.

One example of this is the concept of {\em frames} \cite{duffschaef1}
which has been established as an important background for sampling
theory and signal processing. Different from bases, frame
decompositions are redundant. This property is advantageous in many
applications such as de-noising \cite{majxxl10}, error robustness or sparsity \cite{grib-nielsen1}.

Frames of translates \cite{ole1} are an important class of frames that have a special structure.
 Here, one generating function $h$ is shifted to create the analyzing family of elements, $\left\{ h \left( x - k  b \right) \right\}$. This topic was investigated in \cite{Cachka01} and can be used in Gabor or wavelet theory \cite{Bentreib01}.
 On the other side these frames are very important in the theory of Shift Invariant Spaces (SIS) \cite{bo00,dedero94} that are central in Approximation, Sampling and Wavelet theory.

In signal processing, a typical example of frames of translates are linear time-invariant filters, i.e. convolution operators.
Irregular sampling appears in numerous problems in applications, for example when dealing with jittered samples in sound analysis.
Introducing irregular shifts gives rise to an interesting generalization of those frames.
Irregular frames of translates were investigated for example in \cite{aldrgroech1}.
But there are still many open questions for this case.

Frames of translates are connected to the concept of Gabor
multipliers \cite{feinow1} by the Kohn-Nirenberg correspondence.
These operators by themselves form an interesting subclass of
time-variant filters \cite{hlawatgabfilt1}. Also in this case
irregular shifts are interesting \cite{xxlphd1}, for example in
case of non-uniform frequency sampling as in a scale adapted to
human perception \cite{xxllabmask1}.
Translates of a given function become exponentials multiplied by a
fixed function through the Fourier transform.

In this note we study the general properties of a sequence of functions $\Psi = \{\psi_k\}$ multiplied by a fixed function $h$.
In particular we investigate sufficient and necessary conditions for $\Psi$ and $h$ such that $h \cdot \Psi = \{ h \cdot \psi_k \}$ have a Bessel, frame or Riesz basis property.

For an application to frames of translates we study which properties of a set of exponentials of irregular frequencies
are preserved when they are multiplied by a fixed function $h$, and
we characterize those functions $h$ that preserve these properties.
This gives  properties of the set of translates of the inverse
Fourier transform of $h$. On the other side, using density results
due to Beurling \cite{Beu66}, we prove the existence, and provide a
method to construct, frame sequences of irregular translates.

In particular, we show that for a bounded set $E$ and any integrable
function $h$ in $P_E$ (the space of functions  in $L^2(\RR^d)$ whose
Fourier transform is supported on $E$ (c.f. \eqref{eqpe})) the set
$\{h(\cdot - \lambda_k)\}_{k\in K}$ cannot be a frame for $P_E$ even
though  the exponential functions $\{e^{-2\pi i\lambda_k x}\}_{k\in
K}$ are a frame for $L^2(E)$. However, we can choose a Schwartz
class function {\em outside} $P_E$, whose translates  on a slightly
larger set $\{\lambda'_k\}_{k\in K}$, allow us to obtain
reconstruction formulae for any function in $P_E$.

The organization of the paper is as follows. In Section 2 we set the
notation, give basic definitions and state known results. In Section
3 we give conditions in order that a sequence that form a
frame (Bessel sequence or Riesz sequence), remains a frame (Bessel sequence or Riesz sequence respectively) when multiplied by an appropriate function.
Finally in section 4 we apply these results to the study of frames
of translates.

\section{Preliminaries and Notation}

Throughout the article $E$ will denote a bounded subset of $\RR^d$ and $K$ will be a countable index set.

Let $\mathcal{H}$ be a separable Hilbert space.
A sequence $\{\psi_k\}_{k\in K}\subseteq \mathcal{H}$ is a {\sl frame} for $\mathcal{H}$ if there exist positive
constants $A$ and $B$ that satisfy
$$A\|f\|^2\leq\sum_{k\in K}|\langle f,\psi_k\rangle|^2 \leq
B\|f\|^2\,\,\,\,\,\forall f \in \mathcal{H}.$$

If $A=B$ then it is called a {\sl tight frame}. If $\{\psi_k\}_{k\in K}$ satisfies the right inequality in the above
formula, it is called a {\sl Bessel sequence}.

A sequence $\{\psi_k\}_{k\in K}$ is a {\sl Riesz basis} for $\mathcal{H}$ if it is complete in
$\mathcal{H}$ and there exist positive constants $A$ and $B$ such that for every finite scalar sequence
$\{c_k\}$ one has
$$A \sum |c_k|^2 \leq \|\sum c_k \psi_k\|^2\leq
B\sum |c_k|^2.$$

We say that $\{\psi_k\}_{k\in K}$ is a {\sl frame sequence} if it is a frame for the space it spans, and it is a {\sl Riesz sequence} if it is a Riesz basis for the space it spans.

For a closed subspace $\mathcal{V} \subseteq \mathcal{H}$ denote the
projection on it by $\mathcal{P}_{\mathcal{V}}$. A sequence
$\{\phi_k\}_{k\in K}\subseteq \mathcal{H}$ is an {\sl outer frame}
\cite{ACM04} for a closed subspace $\mathcal{V} \subseteq
\mathcal{H}$ if $\{\mathcal{P}_{\mathcal{V}} (\phi_k)\}_{k\in K}$ is
a frame for $\mathcal{V}$.

For two sets $F \subseteq G \subseteq \RRd$ we use the notation
\begin{equation}\label{sec:embedltwo1}
\widetilde{ L^2 (F)}^{(G)} := \left\{ f \in L^2(G) : f(x) = 0 \text { for } a.e. \ x \in G \backslash F \right\} .
\end{equation}
This set is isomorphic to $L^2(F)$ using $\varphi  : \widetilde{ L^2 (F)}^{(G)} \rightarrow L^2 (F)$ where $\varphi\left( f \right) = f{|_F}$.
\\
When $G = \RR^d$ we will just write $L^2(F)$ in place of $\widetilde{ L^2 (F)}^{(\RR^d)}$.

Frames of exponentials have been studied in \cite{duffschaef1}.
Conditions on a discrete set $\Lambda$ such that $\{e^{-2\pi
i\lambda x}\}_{\lambda \in \Lambda}$ is a frame or a Riesz basis for $L^2(E),$ where
$E\subseteq \RR$ is a bounded interval, have been given in \cite{pav},
\cite{Jaf}, \cite{Seip1}, \cite{Lan67}. For the case that $E$ is a finite
union of certain intervals it is known that such sets $\Lambda$
exist \cite{lyusei}. In higher dimensions, there exists results for particular sets
$E$ \cite{lyuras}, \cite{sei}.

For $\lambda \in \RR^d$, we denote by $e_{\lambda}$ the function
defined by $e_{\lambda}(x)=e^{-2\pi i\lambda x}$ and by
$T_{\lambda}$ the operator $T_{\lambda}f(x)=f(x-\lambda).$ We will
use $|E|$ to denote the Lebesgue measure of a measurable set $E.$
For standard results on integration theory we use in this article we
refer e.g. to \cite{fo99}, \cite{embe02}, \cite{gra08}. We write
$\hat {f}$ for the Fourier transform given by
$f(\omega)=\int_{\RR^d}f(x)e^{-2\pi i \omega x},dx$ for $f\in
L^1(\RR^d)\cap L^2(\RR^d),$ with the natural extension to
$L^2(\RR^d).$

Let $\Lambda=\{\lambda_k\}_{k\in K}$ be a sequence in $\RR^d$.
Throughout the paper when we say that a set of exponentials $\{e_{\lambda_k}:{k\in K} \}$ is a frame (or  a Riesz basis) of $L^2(E)$  we will mean that the set $\{e_{\lambda_k} \chi_E\}_{k\in K}$ has that property. Here  $\chi_E$ stands for the indicator function of $E$.

\subsection{Existence of irregular exponentials frames}

In \cite{Beu66} Beurling gave sufficient conditions on a discrete set $\Lambda$ in $\RR^d$, in order that the associated
exponentials $\{e_{\lambda}\}_{\lambda \in \Lambda}$
form a frame when restricted to a ball. These conditions are given in terms of density.
In this section we review those results that will be used later.


\begin {definition}
A set $\Lambda$ is separated if
$$
\inf_{\lambda \ne \lambda'}\|\lambda-\lambda' \|>0.
$$
\end {definition}
There are many notions for the density of a set $\Lambda$. We start with
definitions that
are due to Beurling.
\begin {definition}
\
\begin {enumerate}
\item A lower uniform density $D^-(\Lambda)$ of a separated set $\Lambda\subset \RR^d$ is defined as
$$
D^-(\Lambda)=\lim\limits_{r\to \infty} \frac {\nu^-(r)} {(2r)^d}
$$
where $\nu^-(r):=\min\limits_{y \in \RR^d} \card\left({\Lambda\cap
(y+[-r,r]^d)}\right)$, where $\card(Z)$ denotes the cardinal of the
set $Z$.
\item An upper uniform density $D^+(\Lambda)$ of a separated set $\Lambda$ is
defined as
$$
D^+(\Lambda)=\lim\limits_{r\to \infty} \frac {\nu^+(r)} {(2r)^d}
$$
where $\nu^+(r):=\max\limits_{y \in \RR} \#\left( {\Lambda\cap
(y+[-r,r]^d)}\right)$.
\item If $D^-(\Lambda)=D^+(\Lambda)=D(\Lambda)$, then $\Lambda$ is said to have uniform
Beurling density $D(\Lambda)$.
\end {enumerate}
\end {definition}
\begin {remark}

The existence of the limits in the definitions of $D^-(\Lambda)$
and $D^+(\Lambda)$ is a consequence of the separateness of $\Lambda.$

\end {remark}
As an example, let $l > 0$ and  $\Lambda=\{\lambda_j : j\in\ZZ
\}\subset \RR$ be separated sequence such that there exists $L>0$
with  $ |\lambda_j-\frac {j} {l}| \le L$, for all $j \in \ZZ$. Then
$D^-(\Lambda)=D^+(\Lambda)=l$.

For the one dimensional case, Beurling proved
the following theorem.
\begin {theorem} (Beurling)
\label {B1}
Let $\Lambda \subset \RR$ be separated, $a  > 0$ and $\Omega=[-\frac{a} {2},\frac{a}
{2}]$. If $a<D^-(\Lambda)$  then
$\{e_{\lambda}\chi_{\Omega}\}_{\lambda \in \Lambda}$ is a frame for $ L^2 (\Omega)$.
\end {theorem}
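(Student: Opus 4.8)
The plan is to derive Beurling's frame theorem for an interval from a sampling-theoretic statement that is typically what Beurling actually proves, namely a comparison/stability result: if $\Lambda$ is separated with $D^-(\Lambda) > a$, then for the Paley--Wiener space $PW_{[-a/2,a/2]}$ of functions whose Fourier transform is supported in $\Omega = [-a/2,a/2]$, the sampling inequality
\[
A \|f\|_2^2 \le \sum_{\lambda \in \Lambda} |f(\lambda)|^2 \le B \|f\|_2^2
\]
holds for all $f \in PW_{\Omega}$. First I would set up the standard dictionary between sampling in $PW_\Omega$ and exponential frames in $L^2(\Omega)$: writing $f = \widehat{g}$ with $g \in L^2(\Omega)$ (extended by zero to $\RR$), one has $f(\lambda) = \int_\Omega g(x) e^{-2\pi i \lambda x}\,dx = \langle g, e_{\lambda}\chi_\Omega\rangle$ up to a conjugation, and $\|f\|_2 = \|g\|_2$ by Plancherel. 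Hence the sampling inequality above is \emph{literally} the frame inequality for $\{e_\lambda \chi_\Omega\}_{\lambda \in \Lambda}$ in $L^2(\Omega)$, and the theorem follows immediately once the sampling inequality is in hand.

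So the real content is the sampling inequality, and I would prove it in two halves. The upper (Bessel) bound is the easy half: it uses only that $\Lambda$ is separated (not the density hypothesis). One shows that for $f \in PW_\Omega$ the values $|f(\lambda)|^2$ are controlled by a local average of $|f|^2$ — for instance $|f(\lambda)|^2 \lesssim \int_{\lambda - \delta}^{\lambda+\delta} |f(t)|^2\,dt$ uniformly, by the Plancherel–Pólya / mean value type estimate for bandlimited functions (write $f(\lambda)$ in terms of its values nearby using that $f'$ is again bandlimited with controlled norm) — and then separation guarantees that the intervals $(\lambda-\delta,\lambda+\delta)$ have bounded overlap, so summing gives $\sum_\lambda |f(\lambda)|^2 \lesssim \|f\|_2^2$. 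The lower bound is the hard part and is where Beurling's argument genuinely lives: the condition $a < D^-(\Lambda)$ must be used to guarantee that $\Lambda$ is ``dense enough'' relative to the bandwidth that no nonzero $f \in PW_\Omega$ can be small at every $\lambda$. The standard route is a compactness/normal-families argument: if the lower bound failed, take a sequence $f_n \in PW_\Omega$ with $\|f_n\|_2 = 1$ but $\sum_\lambda |f_n(\lambda)|^2 \to 0$, possibly after translating so the ``mass'' is anchored near the origin; the $f_n$ form a normal family (bandlimited, $L^2$-bounded $\Rightarrow$ locally uniformly bounded and equicontinuous), so a subsequence converges locally uniformly to some bandlimited $f$; one argues $f \not\equiv 0$ using that a bandlimited $L^2$ function of norm $1$ cannot have all its energy escape to infinity (again Plancherel–Pólya), yet $f$ vanishes on a translate of $\Lambda$, and then the density hypothesis $D^-(\Lambda) > a$ is exactly what forbids a nonzero element of $PW_\Omega$ from vanishing on such a set — this is the uniqueness-set half of Beurling's theory, itself proved via a quantitative Jensen-type estimate counting zeros of entire functions of exponential type against the density of $\Lambda$.

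The main obstacle, then, is the lower sampling bound, and within it the zero-counting / uniqueness argument that converts the density inequality into a statement about the impossibility of a bandlimited function vanishing on $\Lambda$. I would not reproduce that argument in full — it is the substance of Beurling's 1966 paper \cite{Beu66} — but would cite it and present the reduction above. One small technical point to handle cleanly: the definition of $D^-$ uses cubes $y + [-r,r]^d$ while here $d=1$, so everything is with intervals $y+[-r,r]$; and the strict inequality $a < D^-(\Lambda)$ is essential and is used with a little room to spare in the compactness step (one can pass to a slightly smaller density still exceeding $a$, or equivalently thicken the band slightly), so I would phrase the normal-families limit to exploit that slack.
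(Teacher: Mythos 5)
The paper offers no proof of this statement: it is quoted verbatim as Beurling's theorem and supported only by the citation \cite{Beu66}, so there is no ``paper proof'' to match your argument against. Judged on its own terms, your reduction is correct and is the standard one: via Plancherel, the frame inequality for $\{e_\lambda\chi_\Omega\}$ in $L^2(\Omega)$ is literally the sampling inequality for the Paley--Wiener space of functions bandlimited to $\Omega$, and the upper (Bessel) bound does follow from separation alone by a Plancherel--P\'olya local-average estimate with bounded overlap. That part of your outline I would accept as a proof sketch.

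The issue is that the lower bound --- which is the entire content of the theorem --- is still deferred to \cite{Beu66}, so your proposal does not actually close the gap the paper leaves open; it only relocates it. If you intend the compactness step to be more than a pointer to the literature, two technical points need care. First, after normalizing the extremizing sequence $f_n$ by translations so that mass stays near the origin, the locally uniform limit $f$ is in general only a \emph{bounded} function of exponential type (an element of the Bernstein space), not of $L^2$; simultaneously the translated sets $\Lambda-x_n$ must be replaced by a weak limit $\Lambda'$, and one needs that every such weak limit again satisfies $D^-(\Lambda')\ge D^-(\Lambda)>a$. The uniqueness statement you then need is Beurling's: a nonzero bounded function of exponential type $\pi a$ cannot vanish on a separated set of lower uniform density exceeding $a$. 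That is precisely the quantitative zero-counting argument you declined to reproduce, and without it the proof is not self-contained. Second, the claim that ``a bandlimited $L^2$ function of norm $1$ cannot have all its energy escape to infinity'' is not by itself enough to guarantee $f\not\equiv 0$; the standard fix is to choose the translation points $x_n$ so that $|f_n(x_n)|\ge c>0$ for a uniform $c$, which again rests on a Plancherel--P\'olya/Bernstein estimate. In short: the reduction and the easy half are right, the strict inequality $a<D^-(\Lambda)$ is correctly identified as the place where slack is consumed, but the argument as written proves nothing beyond what the citation already asserts.
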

The previous result however is only valid in one dimension. For higher
dimensions, Beurling introduced the following notion:
\begin {definition}
The gap $\rho$ of the set $\Lambda$ is defined as
$$
\rho=\rho(\Lambda)=\inf \left\{ {r>0: \, \bigcup_{\lambda \in \Lambda} B_r(\lambda)=\RR^d}\right\}
$$
\end {definition}
Equivalently, the gap $\rho$ can be defined as
$$\rho=\rho(\Lambda)=\sup_{x\in \RR^d} {\inf_{\lambda\in
\Lambda}|x-\lambda|}.
$$

It is not difficult to show that if $\Lambda$ has gap $\rho$, then
$D^-(\Lambda)\ge \frac {1}{2\rho}$.
For a separated set $\Lambda$, and for the case where
$\Omega$ is the ball $B_{r}(0)$ of radius $r$ centered at the origin,
Beurling proved the following result:
\begin {theorem}[Beurling]
\label {beurlingthm} Let $\Lambda\subset \RR^d$ be separated, and
$\Omega=B_{r}(0)$. If $r\rho <\frac{1}{4}$, then
$\{e_{\lambda}\chi_{\Omega}\}_{\lambda \in \Lambda}$ is a frame for
$ L^2 (\Omega)$.
\end {theorem}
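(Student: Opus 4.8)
The plan is to recast the claim as a sampling inequality for the space $P_{\Omega}$ (the functions in $L^{2}(\RRd)$ whose Fourier transform is supported on $\Omega$) and then to isolate the one genuinely hard ingredient. For $g\in L^{2}(\Omega)$, let $G$ be the extension of $g$ by $0$ to $\RRd$ and let $\check G$ be the inverse Fourier transform of $G$; then $\langle g,e_{\lambda}\chi_{\Omega}\rangle=\check G(\lambda)$, and by Plancherel the map $g\mapsto\check G$ is an isometry of $L^{2}(\Omega)$ onto $P_{\Omega}$. Hence $\{e_{\lambda}\chi_{\Omega}\}_{\lambda\in\Lambda}$ is a frame for $L^{2}(\Omega)$ if and only if there exist $A,B>0$ with
\[
A\,\|F\|_{2}^{2}\ \le\ \sum_{\lambda\in\Lambda}|F(\lambda)|^{2}\ \le\ B\,\|F\|_{2}^{2}\qquad\text{for every }F\in P_{\Omega},
\]
where $\|\cdot\|_{2}$ is the norm of $L^{2}(\RRd)$. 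The right-hand (Bessel) inequality is the classical Plancherel--P\'olya inequality and uses only that $\Lambda$ is separated: band-limitedness gives $|F(\lambda)|^{2}\le c_{r}\int_{B_{\eta}(\lambda)}|F|^{2}$ for a fixed small $\eta>0$, and one sums over the separated $\Lambda$. So the entire content is the lower inequality --- that $\Lambda$ is a set of sampling for $P_{B_{r}(0)}$.

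For the lower inequality I would invoke the standard weak-limit reduction of Beurling: $\Lambda$ is a set of sampling for $P_{\Omega}$ if and only if every weak limit $\Lambda'$ of translates $\Lambda-x_{n}$ is a set of uniqueness for $P_{\Omega}$, i.e.\ no nonzero $F\in P_{\Omega}$ vanishes on all of $\Lambda'$. The gap hypothesis passes to such limits: by definition of $\rho=\rho(\Lambda)$ every ball $B_{\rho}(x)$ meets $\Lambda$, hence meets every $\Lambda-x_{n}$, hence meets $\Lambda'$; and $\Lambda'$ is still separated, with the same constant. So it suffices to prove: if $\Lambda'\subset\RRd$ is separated, meets every ball of radius $\rho$, and $r\rho<\tfrac14$, then $F\in P_{B_{r}(0)}$ with $F|_{\Lambda'}\equiv0$ forces $F\equiv0$. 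In dimension one this is immediate from the material already recalled: $\Omega=(-r,r)$ has length $2r$, the gap gives $D^{-}(\Lambda)\ge\tfrac{1}{2\rho}>2r=|\Omega|$, so Theorem~\ref{B1} applies directly; equivalently, a nonzero $F\in P_{(-r,r)}$ is entire of exponential type $2\pi r$ and lies in $L^{2}(\RR)$, so it cannot vanish on a separated set of lower density exceeding $2r$, whereas $\Lambda'$ has $D^{-}(\Lambda')\ge\tfrac{1}{2\rho}>2r$ --- and this already shows that $\tfrac14$ is the sharp constant in dimension one.

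In dimension $d\ge2$ the zero set of a band-limited $F$ is a real-analytic hypersurface rather than a discrete set, so one must argue with $F$ itself. Choose a measurable partition $\{E_{\lambda}\}_{\lambda\in\Lambda'}$ of $\RRd$ with $\lambda\in E_{\lambda}\subseteq B_{\rho}(\lambda)$ --- available precisely because the balls $B_{\rho}(\lambda)$ cover $\RRd$; if $F|_{\Lambda'}\equiv0$, then for a.e.\ $x$, writing $\lambda$ for the centre of the cell containing $x$, one has $|F(x)|=|F(x)-F(\lambda)|\le\sup_{|s|\le\rho}|F(x)-F(x+s)|$ because $|x-\lambda|<\rho$, and hence
\[
\|F\|_{2}^{2}\ \le\ \int_{\RRd}\ \sup_{|s|\le\rho}|F(x)-F(x+s)|^{2}\,dx .
\]
The problem thus reduces to a quantitative oscillation estimate for band-limited functions, $\int_{\RRd}\sup_{|s|\le\rho}|F(x)-F(x+s)|^{2}\,dx\le C(r\rho)\,\|F\|_{2}^{2}$: once $C(r\rho)<1$ we get $F\equiv0$, and the same estimate, fed into a smooth quasi-interpolant $F\mapsto\sum_{\lambda}F(\lambda)\psi_{\lambda}$ subordinate to $\{B_{\rho}(\lambda)\}$ followed by the orthogonal projection onto $P_{B_{r}(0)}$, yields the full sampling inequality via a Neumann series. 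The elementary Bernstein bound $\|F(\cdot)-F(\cdot-s)\|_{2}\le2\pi r|s|\,\|F\|_{2}$ (from $|1-e^{-2\pi is\xi}|\le2\pi|s|\,|\xi|\le2\pi r|s|$ on $B_{r}(0)$) only controls the smaller quantity $\sup_{|s|\le\rho}\|F(\cdot)-F(\cdot-s)\|_{2}^{2}$, and even that with a constant that is $<1$ in a strictly smaller range than $r\rho<\tfrac14$.

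The main obstacle is obtaining the \emph{sharp} oscillation estimate --- equivalently, the sharp threshold $r\rho<\tfrac14$ --- and obtaining it uniformly in the dimension. This is exactly where Beurling's local harmonic analysis on the ball enters: one needs precise mean-value and maximal inequalities for functions in $P_{B_{r}(0)}$, which for $d\ge2$ take over the role that the density bound on zeros of entire functions of exponential type plays for $d=1$, and it is these sharp inequalities --- critical precisely at $r\rho=\tfrac14$ --- that carry the argument. Everything else, namely the Fourier reformulation, the weak-limit reduction, and the passage of the gap condition to weak limits, is standard; the weight of the theorem rests on these band-limited estimates.
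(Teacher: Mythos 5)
First, note that the paper does not prove this theorem at all: it is quoted from Beurling \cite{Beu66} and used as a black box, so there is no internal proof to compare your attempt against. What you have written is a reduction rather than a proof, and you say so yourself; the question is whether what remains after your reductions is genuinely ``standard'' or is in fact the whole content of the theorem. It is the latter. The Fourier reformulation, the Plancherel--P\'olya upper bound for separated $\Lambda$, and the one-dimensional case via Theorem~\ref{B1} (with $a=2r$ and $D^-(\Lambda)\ge\frac{1}{2\rho}>2r$) are all fine. But for $d\ge2$ everything rests on two ingredients you only name: (i) the weak-limit characterization of sampling sets, and (ii) a sharp oscillation/maximal inequality for functions band-limited to $B_r(0)$ that is critical exactly at $r\rho=\frac14$. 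Item (i) is itself a substantive compactness theorem, and in the form you need it --- equivalence of the $L^2$ frame inequality with a uniqueness property for every weak limit of translates --- it requires passing between $L^2$ sampling and uniqueness for the Bernstein space of \emph{bounded} band-limited functions, which you do not address. Item (ii) \emph{is} the theorem: the constant $\frac14$ comes from an estimate of the shape $\|F\|_\infty\le\big(\cos(2\pi r\rho)\big)^{-1}\sup_{\lambda\in\Lambda}|F(\lambda)|$ for bounded $F$ with spectrum in $B_r(0)$, proved by a delicate restriction-to-lines argument, and you explicitly concede that the Bernstein-type bound you can actually prove yields a strictly smaller admissible range of $r\rho$. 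The final upgrade from a uniqueness statement to a frame inequality via a quasi-interpolant and a Neumann series is likewise only gestured at and would itself require the quantitative estimate you lack.

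So the proposal is correctly structured as a reduction, but the irreducible core --- the sharp band-limited inequality at the threshold $r\rho=\frac14$, together with the compactness argument that lets you apply it --- is missing, and without it the argument does not close. Since the paper itself simply cites \cite{Beu66} for this statement, the honest options are either to do the same or to reproduce Beurling's estimates in full; the intermediate steps you supply, while accurate, are not a substitute for them.
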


Note that actually the same set of exponentials is also a frame for $L^2(B_{r}(x))$ for any vector $x\in \RR^d$.

Using these results, in order to construct a frame of exponentials
of $L^2(E)$ for a bounded set $E \subset \RR^d$, it is enough to
find a separated sequence $\Lambda$ in $\RR^d$ with gap $\rho <
\frac{1}{4r}$ with $r $ the radius of a ball containing $E$.

\section{Frames by Multiplication} \label{sec:framesmultipl}

In this section we will investigate the properties of the sequence $\{\varphi \cdot \psi_k\}$ given the sequence $\{\psi_k\}$ and the function $\varphi$.

\begin{lemma}\label{compl}
Let $\{\psi_k\}_{k \in K}$ be complete in $L^2(E).$ Let
$\varphi\in L^2(\RR^d)$ such that $\{\varphi\psi_k\}$ is in $L^2(E)$ and $|\{t\in E :\varphi(t)=0\}|=0.$ Then
$\{\varphi\psi_k\}$ is complete in $L^2(E).$
\end{lemma}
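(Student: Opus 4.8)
The plan is to show that the only element of $L^2(E)$ orthogonal to every $\varphi\psi_k$ is $0$. So suppose $g \in L^2(E)$ satisfies $\langle g, \varphi\psi_k\rangle = 0$ for all $k \in K$. The key observation is that $\langle g, \varphi\psi_k\rangle = \int_E g(t)\overline{\varphi(t)}\,\overline{\psi_k(t)}\,dt = \langle \overline{\varphi}\,g, \psi_k\rangle$, so if I set $u := \overline{\varphi}\,g$, then $u$ is orthogonal to all $\psi_k$. If I can argue that $u \in L^2(E)$, then completeness of $\{\psi_k\}$ in $L^2(E)$ forces $u = 0$, i.e. $\overline{\varphi(t)}\,g(t) = 0$ for a.e. $t \in E$. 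Since $\{t \in E : \varphi(t) = 0\}$ has measure zero, this gives $g(t) = 0$ for a.e. $t \in E$, hence $g = 0$, which is what we want.

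The step that needs care is verifying $u = \overline{\varphi}\,g \in L^2(E)$; a priori the product of two $L^2$ functions need only be in $L^1$. The way around this is to exploit the hypothesis that $\varphi\psi_k \in L^2(E)$ together with the structure of the argument: rather than forming $\overline{\varphi}g$ as a standalone $L^2$ function, I would test against the $\psi_k$ only after a truncation. Concretely, for $N \in \NN$ let $E_N := \{t \in E : |\varphi(t)| \le N\}$ and replace $g$ by $g\chi_{E_N}$; then $\overline{\varphi}\,g\chi_{E_N}$ is genuinely in $L^2(E)$ since $|\overline{\varphi}\,g\chi_{E_N}| \le N|g|$. For this truncated function I have $\langle \overline{\varphi}\,g\chi_{E_N}, \psi_k\rangle = \int_{E_N} g\overline{\varphi}\,\overline{\psi_k} = \langle g\chi_{E_N}\,\overline{\varphi}, \psi_k\rangle$; relating this back to the hypothesis $\langle g, \varphi\psi_k\rangle = 0$ requires knowing $g\overline{\varphi\psi_k} \in L^1(E)$ (true, as product of two $L^2$ functions) and then using dominated convergence in $N$ on the integral $\int_E g\chi_{E_N}\overline{\varphi\psi_k}$, whose dominating function is $|g\,\varphi\psi_k| \in L^1(E)$. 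This shows $\langle g\chi_{E_N}\overline{\varphi}, \psi_k\rangle \to \langle g, \varphi\psi_k\rangle = 0$, but I want each term, not just the limit, to vanish.

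To get each term to vanish I would instead test against a cleaner family: note $\langle \overline\varphi\, g\chi_{E_N}, \psi_k\rangle$ need not be zero by itself, so the honest route is to show directly that $\overline\varphi g \in L^2(E)$ is impossible to avoid by a smarter choice. Actually the cleanest argument: since $\varphi\psi_k \in L^2(E) \subseteq L^1(E)$ (as $E$ is bounded) and $g \in L^2(E) \subseteq L^1(E)$, Hölder gives $g\,\overline{\varphi\psi_k} \in L^1(E)$; now apply Fubini/approximation to write the vanishing of all these integrals as the statement that the (finite, complex) measure $d\mu := g\,\overline{\varphi}\,dt$ on $E$ annihilates every $\psi_k$. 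Density of $\{\psi_k\}$ in $L^2(E)$, combined with boundedness of $E$ so that bounded functions are dense and $d\mu$ has finite total variation $\int_E |g\varphi|\,dt < \infty$, lets me conclude $d\mu = 0$ as a measure, i.e. $g\overline{\varphi} = 0$ a.e. on $E$. The main obstacle, and the only subtle point, is precisely this passage from "orthogonal to each $\psi_k$" to "the measure $g\overline\varphi\,dt$ vanishes", which hinges on the boundedness of $E$ (hence finiteness of $|g\varphi|$ in $L^1$) and on approximating a general $L^2(E)$ element — against which $g\overline\varphi\,dt$ pairs by finiteness of total variation — by finite linear combinations of the $\psi_k$. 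Once $g\overline\varphi = 0$ a.e. on $E$, the hypothesis $|\{t \in E : \varphi(t) = 0\}| = 0$ immediately yields $g = 0$ a.e. on $E$.
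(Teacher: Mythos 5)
Your opening reduction --- pass from $\langle f,\varphi\psi_k\rangle=0$ to the statement that $u=\overline{\varphi}f$ annihilates every $\psi_k$, and observe that $u$ is only known to lie in $L^1(E)$ --- is exactly the paper's first step, and you are right to abandon the truncation detour. But the argument you finally settle on has a genuine gap at precisely the point you yourself flag as ``the main obstacle'': the claim that $L^2(E)$-density of ${\rm span}\{\psi_k\}$, together with finiteness of the total variation of $d\mu=f\overline{\varphi}\,dt$, forces $\mu=0$. The functional $h\mapsto\int_E h\,d\mu$ is bounded on $(L^\infty(E),\|\cdot\|_\infty)$ but is \emph{not} continuous for the $L^2$ norm unless $f\overline{\varphi}\in L^2(E)$, which is exactly what is unknown; so knowing that a bounded function is an $L^2$-limit of elements of ${\rm span}\{\psi_k\}$ gives no control whatsoever on its integral against $\mu$. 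Worse, the abstract principle you invoke is false: if $w\in L^1(E)\setminus L^2(E)$, the functional $h\mapsto\int_E\overline{w}\,h$ is $L^2$-discontinuous on $L^\infty(E)$, hence its kernel there is $L^2$-dense, and a countable $L^2$-dense subset of that kernel is a complete system of bounded functions annihilated by the nonzero finite measure $w\,dt$. Consequently no argument using only ``$\mu$ is finite, $\mu$ annihilates each $\psi_k$, and the span is $L^2$-dense'' can close the proof; some additional structure must enter.

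The paper supplies two ingredients that are both absent from your proposal. First, it asks $\mu$ to annihilate only the specific bounded functions $e_{n/L}\chi_E$ with $n\in\ZZ^d$ and $E\subseteq[-\frac{L}{2},\frac{L}{2}]^d$, and to transfer the vanishing of $\int_E (f\overline{\varphi})\overline{p}$ for $p\in{\rm span}\{\psi_k\}$ to these exponentials it does not use $L^2$-continuity: it extracts from the $L^2$-approximants a subsequence converging a.e., applies Egorov's theorem to obtain uniform convergence off a set of measure less than $\delta$, and uses the absolute continuity of $\int|f\overline{\varphi}|$ to handle the exceptional set. Second, once all Fourier coefficients of the extension-by-zero of $f\overline{\varphi}$ to the cube are shown to vanish, it invokes uniqueness of Fourier coefficients for $L^1$ functions (the trigonometric system, unlike a general complete system, has this $L^1$-uniqueness property); only then does $|\{\varphi=0\}\cap E|=0$ yield $f=0$. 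Your proposal would become a proof only after both of these steps are added, at which point it coincides with the paper's argument.
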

\begin{proof}
Assume $f\in L^2(E)$ and $<f, \varphi \psi_k>=0$ for every $k
\in K.$ Then
\begin{equation} \label{zero-k}
\int_E f \overline{\varphi \psi_k}= \int_E (f
\overline{\varphi})\overline{\psi_k}=0 \text{ for every } k
\in K. \end{equation}

Let $L$ be large enough so that $E\subseteq [-\frac{L}{2},\frac{L}{2}]^d$,
$\varepsilon > 0$ and set $g = f \overline{\varphi}$.

Note that since $g\in L^1(E),$ there exists $\delta
>0$ such that $\int_A |g|< \varepsilon,$ for every set $A$ such that
$|A|<\delta.$

Let now $n$ be in $\ZZ^d.$  Since $\{\psi_k\}_{k \in K}$ is
complete in $L^2(E),$ we have a sequence  $\{f_m\}_{m \in \NN}$ in
${\rm span}\{\psi_k\}_{k \in K}$ that converges  to
$e_{\frac{n}{L}}\chi_E$ in $L^2(E)$. So there exists a subsequence
$\{f_{m_l}\}_{l\in \NN}$ that converges $a.e.$ to $e_{\frac{n}{L}}\chi_E.$

Since $E$ has finite measure, by  Egorov's theorem  we can choose a closed subset
$F$ of $E$ such that $|E\setminus F|<\delta$ and $\{f_{m_l}\}_{l\in
\NN}$ converges uniformly to $e_{\frac{n}{L}}\chi_E$ on $F.$

So we have
$$\left|\int_E ge_{\frac{n}{L}}\right| \leq \left|\int_F ge_{\frac{n}{L}}\right| +\left| \int_{E\setminus F}
ge_{\frac{n}{L}}\right|
\leq \lim_l \left| \int_F g f_{m_l} \right| + \left| \int_{E\setminus F} g e_{\frac{n}{L}}\right|
\leq \varepsilon
$$

Since $\varepsilon$ is arbitrary, it follows that$\int_E
ge_{\frac{n}{L}}=0$  for every integer $n.$

Let $\tilde g$ be the extension of $g$ to
$[-\frac{L}{2},\frac{L}{2}]^d$, which is zero a.e in
$[-\frac{L}{2},\frac{L}{2}]^d \setminus E$. Note that $\tilde g$ is
in $L^1([-\frac{L}{2},\frac{L}{2}]^d).$ Now, using the completeness
of $\{e_{\frac{n}{L}}\}_{n \in \ZZ^d}$, in
$L^2([-\frac{L}{2},\frac{L}{2}]^d)$ applying a similar argument as
in the proof of Theorem 2 in \cite{young}, we obtain that
$\tilde{g}=0$ a.e. in $[-\frac{L}{2},\frac{L}{2}]^d.$ Since
$\varphi\neq 0$ a.e in $E,$ it follows that $f=0$ a.e. in $E.$
\end{proof}

\begin{proposition}\label{fr}
Let $\varphi$ be a measurable function defined in $\RR^d$ and $\{\psi_k\}_{k \in K}$ be a frame of $L^2(E).$ Then
\begin{enumerate}
\item $\{\varphi\psi_k\}_{k \in K}$ is a frame of
$L^2(E)$ if and only if there exist constants $A$ and $B$ such
that
\begin{equation}\label{const}
0<A \leq B<+ \infty \,\,\,\,\text{ and }\,\, A\leq
|\varphi(t)|\leq B \,\,\,\,\,\text{ a.e. } t\in E.
\end{equation}

\item If $\varphi\in L^2(\RR^d)$ such that   $\{\varphi\psi_k\}$ is in $L^2(E)$ and $|\{t\in E:\varphi(t)=0\}|=0,$ then
$\{\varphi\psi_k\}$ is complete in $L^2(E).$
\end{enumerate}

\end{proposition}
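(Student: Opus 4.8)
The plan is to treat the two parts separately, since part (2) is exactly Lemma~\ref{compl} (a frame is in particular complete), so nothing new is needed there beyond verifying that the hypotheses of the proposition match those of the lemma. All the work is in part (1).

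For part~(1), the key identity is that for every $f\in L^2(E)$,
\[
\sum_{k\in K} |\langle f, \varphi\psi_k\rangle|^2
= \sum_{k\in K} |\langle \overline{\varphi} f, \psi_k\rangle|^2,
\]
which holds because $\langle f,\varphi\psi_k\rangle = \int_E f\,\overline{\varphi\psi_k} = \int_E (\overline{\varphi}f)\,\overline{\psi_k} = \langle \overline{\varphi}f,\psi_k\rangle$. Since $\{\psi_k\}$ is a frame for $L^2(E)$ with bounds $A_0, B_0$, this sum is comparable to $\|\overline{\varphi}f\|^2 = \int_E |\varphi(t)|^2|f(t)|^2\,dt$, provided $\overline{\varphi}f\in L^2(E)$. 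So the whole question reduces to: when is $\int_E |\varphi|^2|f|^2$ comparable to $\int_E|f|^2$ uniformly over $f\in L^2(E)$? That is a standard multiplier statement: the upper bound $\int_E|\varphi|^2|f|^2 \le B^2\int_E|f|^2$ for all $f$ is equivalent to $|\varphi|\le B$ a.e.\ on $E$, and the lower bound $\int_E|\varphi|^2|f|^2 \ge A^2\int_E|f|^2$ for all $f$ is equivalent to $|\varphi|\ge A$ a.e.\ on $E$.

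For the "if" direction one just chains the inequalities: $A\le|\varphi|\le B$ a.e.\ on $E$ gives $A^2\|f\|^2 \le \|\overline\varphi f\|^2 \le B^2\|f\|^2$, hence (together with $\overline\varphi f \in L^2(E)$, which also guarantees $\varphi\psi_k\in L^2$ via the frame/Bessel bound applied the other way, or directly from boundedness of $\varphi$) the displayed sum lies between $A_0 A^2\|f\|^2$ and $B_0 B^2\|f\|^2$, so $\{\varphi\psi_k\}$ is a frame. For the "only if" direction, suppose $\{\varphi\psi_k\}$ is a frame with bounds $A_1,B_1$; then $A_1\|f\|^2 \le \sum_k|\langle f,\varphi\psi_k\rangle|^2 = \sum_k|\langle\overline\varphi f,\psi_k\rangle|^2 \le B_0\|\overline\varphi f\|^2$, so $\int_E|\varphi|^2|f|^2 \ge (A_1/B_0)\|f\|^2$ for all $f$; and symmetrically $\int_E|\varphi|^2|f|^2 \le (B_1/A_0)\|f\|^2$. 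To extract the pointwise bounds from these integral inequalities, I would argue by contradiction: if the set $S_\varepsilon = \{t\in E : |\varphi(t)|^2 < A_1/B_0 - \varepsilon\}$ had positive measure for some $\varepsilon>0$, testing with $f = \chi_{S_\varepsilon}$ (which lies in $L^2(E)$ since $E$ is bounded) would violate the lower bound; similarly a positive-measure set where $|\varphi|^2$ exceeds the upper constant, intersected with a set of finite measure, contradicts the upper bound. This forces $0 < A \le |\varphi| \le B < \infty$ a.e.\ on $E$ with $A^2 = A_1/B_0$ and $B^2 = B_1/A_0$ (up to the usual care about whether the infimum/supremum is attained).

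The main obstacle, such as it is, is bookkeeping rather than depth: one must be careful that the substitution $\langle f,\varphi\psi_k\rangle = \langle \overline\varphi f,\psi_k\rangle$ is legitimate — i.e.\ that $\overline\varphi f\in L^2(E)$ so that the frame inequality for $\{\psi_k\}$ applies — and in the "only if" direction this is not assumed a priori. One clean way around this is to first establish the upper bound: the Bessel property of $\{\varphi\psi_k\}$ plus the frame lower bound for $\{\psi_k\}$, applied to the vectors $g = \overline\varphi f$ restricted to the set where $\varphi$ is bounded, shows $\varphi$ is essentially bounded on $E$, which retroactively justifies $\overline\varphi f\in L^2(E)$ for all $f\in L^2(E)$; then the rest goes through. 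Alternatively one can run the argument on truncations $f_N = f\cdot\chi_{\{|\varphi|\le N\}}$ and pass to the limit with monotone convergence. Either route is routine once the central identity above is in hand.
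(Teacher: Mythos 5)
Your proposal is correct and follows essentially the same route as the paper: part (2) is delegated to Lemma~\ref{compl}, and part (1) rests on the identity $\langle f,\varphi\psi_k\rangle=\langle\overline{\varphi}f,\psi_k\rangle$, reducing everything to the two-sided comparability of $\int_E|\varphi|^2|f|^2$ with $\|f\|^2$, with the pointwise bounds extracted by testing against normalized indicator functions of the level sets of $|\varphi|$. Your explicit attention to justifying $\overline{\varphi}f\in L^2(E)$ in the ``only if'' direction (via truncation or by first establishing boundedness) is a point the paper passes over silently, but it does not change the argument.
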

\begin{proof}

Part 1.:

$\Longrightarrow$)

Assume that both $\{\psi_k\}_{k \in K}$ and
$\{\varphi \psi_k\}_{k \in K}$  are frames of
$L^2(E).$

Assume that for every $A>0,$ there exists a set $U \subseteq E$ of
positive measure such that $|\varphi(t)|< A$ for every $t\in U.$

For $n\in\NN$, let $E_n=\{t\in E:|\varphi(t)|<\frac{1}{n}\}.$
Note that $|E_n|>0$ for every $n\in\NN.$  Define
\begin{equation}\label{sec:fnnn1}
f_n(t)=
\begin{cases}
\frac{1}{\sqrt{|E_n|}}& \text{for } t \in E_n \\
 0 & \text{otherwise}.
\end{cases}
\end{equation}

We have that $\|f_n\|_2=1$ for every $n\in\NN$ and so $f_n \in
L^2(E)$.

If $\alpha$ is the lower frame bound of
$\{\varphi\psi_k\}_{k \in K}$ and $M$ is the upper
frame bound of $\{\psi_k\}_{k \in K},$ then

\begin{align*}
 \alpha & \leq\sum_{k \in K}|<f_n,\varphi\psi_k>|^2
\\
& =\sum_{k \in K}|<f_n\overline{\varphi},\psi_k>|^2
\leq M \|f_n\varphi\|_2^2\\
& =M\int_{E_n}|f_n \varphi|^2 =
\frac{1}{|E_n|}M \int_{E_n}|\varphi|^2 \leq
\frac{M}{n^2}\longrightarrow 0,
\end{align*}
which is a contradiction. So we can conclude that there exists an
$A>0$ such that $A\leq|\varphi(t)|$ a.e $t \in E.$

To prove the existence of the upper bound in \eqref{const}, assume
that  for every $B>0$ there exists a set $V \subseteq E$ of positive measure,
such that $|\varphi(t)|> B$ for every $t\in V.$

For $s\in\NN$, let $E_s=\{t\in E:|\varphi(t)|> s\}.$ We have
that $|E_s|>0$ for every $s\in\NN.$  Define
$f_s(t)$ like in \eqref{sec:fnnn1}.

Let $m$ be the lower frame bound of
$\{\psi_k\}_{k \in K}.$ Then

\begin{align*}
\sum_{k \in K}|<f_s,\varphi\psi_k>|^2 & = \\
& =\sum_{k \in K}|<f_s\overline{\varphi},\psi_k>|^2
\geq m \|f_s\varphi\|_2^2\\
& = m\int_{E_s}|f_s \varphi|^2 =
\frac{1}{|E_s|}m \int_{E_s}|\varphi|^2 \geq m s^2\longrightarrow
+\infty,
\end{align*}

which again is a contradiction, so there must exist a constant $B$
that satisfies \eqref{const}.

$\Longleftarrow$)

Assume there exist positive constants $A,B>0$ such that $A\leq
|\varphi(t)|\leq B \,\,\,\text{a.e. } t \in E$ and that
$\{\psi_k\}_{k \in K}$ is a frame of $L^2(E)$ with
lower and upper frame bounds $m$ and $M$ respectively.

Since $\varphi \in L^\infty(E)$, $f \varphi \in L^2(E)$ for $f\in
L^2(E).$

$$\sum_{k \in K}|<f,\varphi\psi_k>|^2=
\sum_{k \in K}|<f\overline{\varphi},\psi_k>|^2,$$

and so we have that $$m \|f\overline{\varphi}\|^2\leq \sum_{k \in
K}|<f,\varphi\psi_k>|^2\leq M \|f\overline{\varphi}\|^2
\,\,\,\,\text{ for every } f \in L^2(E).$$

But

$$\|f\overline{\varphi}\|^2\geq
A^2\|f\|^2\,\,\,\,\,\,\text{and}\,\,\,\,\,\,
\|f\overline{\varphi}\|^2\leq B^2\|f\|^2,$$

so

$$m A^2\|f\|^2 \leq
\sum_{k \in K}|<f,\varphi\psi_k>|^2\leq M B^2\|f\|^2
\,\,\,\,\text{ for every } f \in L^2(E). $$

This completes the proof of part 1.

Part 2. is an immediate consequence of Lemma \ref{compl}.

\end{proof}


Analogously, the following result can be proved:

\begin{proposition}\label{tfr}
Let $\varphi$ be a measurable function defined in $\RR^d.$ Let $\{\psi_k\}_{k \in K}$ be a tight frame of $L^2(E).$ Then
$\{\varphi\psi_k\}_{k \in K}$ is a tight frame of
$L^2(E)$ if and only if there exists a positive constant $A$ such
that
\begin{equation}\label{tconst}
|\varphi(t)|=A \,\,\,\,\,\text{ a.e. } t\in E.
\end{equation}
\end{proposition}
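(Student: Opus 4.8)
The plan is to mimic the structure of the proof of Proposition \ref{fr}, specializing to the tight case where the lower and upper frame bounds coincide. Write $C$ for the common frame bound of the tight frame $\{\psi_k\}_{k\in K}$, so that $\sum_k |\langle g,\psi_k\rangle|^2 = C\|g\|^2$ for all $g\in L^2(E)$.

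For the forward implication, suppose $\{\varphi\psi_k\}_{k\in K}$ is a tight frame for $L^2(E)$ with bound $D$. Using the identity $\langle f,\varphi\psi_k\rangle = \langle f\overline{\varphi},\psi_k\rangle$ together with tightness of $\{\psi_k\}$, I get
$$D\|f\|^2 = \sum_{k\in K}|\langle f,\varphi\psi_k\rangle|^2 = \sum_{k\in K}|\langle f\overline{\varphi},\psi_k\rangle|^2 = C\|f\overline{\varphi}\|^2 = C\int_E |f|^2|\varphi|^2$$
for every $f\in L^2(E)$; here one first notes, exactly as in part 1 of Proposition \ref{fr}, that the existence of the frame bounds forces $0 < A_0 \le |\varphi| \le B_0$ a.e.\ on $E$ so that all the expressions above are finite and $f\overline{\varphi}\in L^2(E)$. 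Thus $\int_E |f|^2\bigl(|\varphi|^2 - D/C\bigr) = 0$ for all $f\in L^2(E)$. Taking $f = \chi_S$ for an arbitrary measurable $S\subseteq E$ shows $\int_S (|\varphi|^2 - D/C) = 0$ for every such $S$, which forces $|\varphi|^2 = D/C$ a.e.\ on $E$; setting $A = \sqrt{D/C}$ gives \eqref{tconst}.

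For the converse, assume $|\varphi(t)| = A$ a.e.\ on $E$. Then $\varphi\in L^\infty(E)$, so $f\overline{\varphi}\in L^2(E)$ for every $f\in L^2(E)$, and
$$\sum_{k\in K}|\langle f,\varphi\psi_k\rangle|^2 = \sum_{k\in K}|\langle f\overline{\varphi},\psi_k\rangle|^2 = C\|f\overline{\varphi}\|^2 = C\int_E |f|^2|\varphi|^2 = CA^2\|f\|^2,$$
so $\{\varphi\psi_k\}_{k\in K}$ is a tight frame for $L^2(E)$ with bound $CA^2$.

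There is no real obstacle here: the only subtlety is the direction "tight frame by multiplication $\Rightarrow$ constant modulus", where one must first invoke the argument from Proposition \ref{fr} to know that $\varphi$ is bounded above and below (so that $f\overline\varphi$ lies in $L^2(E)$ and the frame identity can be applied), and then test the resulting integral identity against indicator functions of arbitrary measurable subsets of $E$ to upgrade "the integral vanishes" to "the integrand vanishes a.e." The rest is the same bookkeeping with the adjoint identity $\langle f,\varphi\psi_k\rangle = \langle f\overline\varphi,\psi_k\rangle$ that already appears in Proposition \ref{fr}.
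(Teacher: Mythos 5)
Your proof is correct, and it follows exactly the route the paper intends: the paper gives no explicit argument for Proposition \ref{tfr}, stating only that it ``can be proved analogously'' to Proposition \ref{fr}, and your write-up is precisely that analogue carried out in detail. You also correctly identify the two points that the word ``analogously'' glosses over --- first invoking Proposition \ref{fr} (a tight frame being in particular a frame) to get $0<A_0\le|\varphi|\le B_0$ a.e.\ so that $f\overline{\varphi}\in L^2(E)$ and the exact frame identities may be applied, and then testing $\int_E|f|^2\bigl(|\varphi|^2-D/C\bigr)=0$ against indicator functions to upgrade the integral identity to $|\varphi|^2=D/C$ a.e.
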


On the same lines, we can also obtain a similar result for Riesz bases instead of frames:

\begin{proposition} \label{riesz}
Let  $\varphi$ be a measurable function defined in $\RR^d.$ Let
$\{\psi_k\}_{k \in K}$ be a Riesz basis of $L^2(E).$ Then
$\{\varphi\psi_k\}_{k \in K}$ is a Riesz basis of
$L^2(E)$ if and only if there exist constants $A$ and $B$ such
that
\begin{equation}\label{constseq}
0<A \leq B<+ \infty \,\,\,\,\text{ and }\,\, A\leq
|\varphi(t)|\leq B \,\,\,\,\,\text{ a.e. } t\in E.
\end{equation}
\end{proposition}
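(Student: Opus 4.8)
The plan is to follow the same pattern as the proof of Proposition \ref{fr}, using the fact that a Riesz basis of a Hilbert space is at once a frame and a Riesz sequence. For the necessity direction, I would first invoke the standard fact that every Riesz basis of $L^2(E)$ is also a frame for $L^2(E)$ (with its Riesz bounds serving as frame bounds). Thus if $\{\varphi\psi_k\}_{k\in K}$ is a Riesz basis of $L^2(E)$, then both $\{\psi_k\}_{k\in K}$ and $\{\varphi\psi_k\}_{k\in K}$ are frames of $L^2(E)$, and Proposition \ref{fr}(1) applied to this pair immediately produces constants $0<A\le B<+\infty$ with $A\le|\varphi(t)|\le B$ a.e. on $E$. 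So the forward implication costs essentially nothing beyond what is already proved.

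For the sufficiency direction, assume $A\le|\varphi(t)|\le B$ a.e.\ on $E$. Since $E$ is bounded and $\varphi$ is essentially bounded on $E$, I would replace $\varphi$ by $\varphi\chi_E$, which lies in $L^2(\RR^d)\cap L^\infty(E)$ and does not change any product $\varphi\psi_k$; in particular every $\varphi\psi_k$ is in $L^2(E)$ and $|\{t\in E:\varphi(t)=0\}|=0$ because $|\varphi|\ge A>0$. Completeness of $\{\varphi\psi_k\}_{k\in K}$ in $L^2(E)$ then follows from Lemma \ref{compl}, using that a Riesz basis is complete. For the two-sided inequality, take an arbitrary finite scalar sequence $\{c_k\}$ and observe that $\sum c_k\,\varphi\psi_k=\varphi\sum c_k\psi_k$, so the pointwise bounds give $A^2\|\sum c_k\psi_k\|^2\le\|\sum c_k\,\varphi\psi_k\|^2\le B^2\|\sum c_k\psi_k\|^2$; combining this with the Riesz bounds $\alpha,\beta$ of $\{\psi_k\}_{k\in K}$ yields $A^2\alpha\sum|c_k|^2\le\|\sum c_k\,\varphi\psi_k\|^2\le B^2\beta\sum|c_k|^2$, which together with completeness shows $\{\varphi\psi_k\}_{k\in K}$ is a Riesz basis of $L^2(E)$.

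There is no serious obstacle here; the only points requiring care are citing that a Riesz basis is a frame (so that Proposition \ref{fr} is applicable in the necessity direction) and verifying the hypotheses of Lemma \ref{compl} in the sufficiency direction, which is exactly where boundedness of $E$ (ensuring $\varphi\chi_E\in L^2(\RR^d)$) and the lower bound $A>0$ enter. An alternative, slicker packaging of the sufficiency direction is to note that under the pointwise bounds, multiplication by $\varphi$ is a bounded and boundedly invertible operator on $L^2(E)$, with inverse given by multiplication by $1/\varphi$, and that the Riesz basis property is preserved by such operators; I would mention this as a remark but carry out the elementary estimate above for completeness.
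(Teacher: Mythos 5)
Your proof is correct, and the necessity direction coincides with the paper's (Riesz basis $\Rightarrow$ frame, then apply Proposition~\ref{fr}). The sufficiency direction, however, takes a genuinely different route. The paper first uses Proposition~\ref{fr} to conclude that $\{\varphi\psi_k\}_{k\in K}$ is a frame, so every $f\in L^2(E)$ has an expansion $f=\sum_k c_k\varphi\psi_k$ with $\ell^2$ coefficients, and then argues uniqueness of these coefficients by dividing by $\varphi$ (legitimate since $|\varphi|\ge A>0$ a.e.) and appealing to the uniqueness of Riesz-basis coefficients for $\{\psi_k\}_{k\in K}$; implicitly this rests on the characterization of Riesz bases as frames admitting unique coefficient representations. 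You instead verify the paper's definition of a Riesz basis directly: completeness via Lemma~\ref{compl} (after the harmless replacement of $\varphi$ by $\varphi\chi_E$, which you correctly note puts you inside the hypotheses of that lemma because $E$ is bounded and $|\varphi|\le B$ there), together with the two-sided inequality
\[
A^2\alpha\sum|c_k|^2\;\le\;\Bigl\|\sum c_k\,\varphi\psi_k\Bigr\|^2\;\le\;B^2\beta\sum|c_k|^2
\]
obtained from $\sum c_k\varphi\psi_k=\varphi\sum c_k\psi_k$ and the pointwise bounds on $|\varphi|$. Your route is more self-contained (it never needs the frame machinery of Proposition~\ref{fr} for sufficiency and avoids the exact-frame characterization), and it has the side benefit of producing explicit Riesz bounds $A^2\alpha$ and $B^2\beta$; the paper's route is shorter on the page but leans on a standard equivalence it does not restate. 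Your closing remark that multiplication by $\varphi$ is a boundedly invertible operator preserving the Riesz basis property is the cleanest conceptual summary of why the result holds, and is consistent with both arguments.
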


\begin{proof}

$\Longrightarrow$)

Since $\{\varphi\psi_k\}_{k \in K}$ is a Riesz basis of
$L^2(E)$ it is a frame of $L^2(E),$ so by Proposition~\ref{fr} there exist constants $A$ and $B$ such
that inequality (\ref{constseq}) holds.

$\Longleftarrow$)

If there exist constants $A$ and $B$ such
that inequality (\ref{constseq}) holds, then by Proposition~\ref{fr}
$\{\varphi\psi_k\}_{k \in K}$ is a frame of $L^2(E).$ Hence, for every
$f\in L^2(E),$
\begin{equation}\label{des}
f=\sum_{k \in K} c_{\lambda_k}\varphi\psi_k,
\end{equation}
where $\{c_{\lambda_k}\}_{\lambda_k \in \Lambda}\in \ell^2(\Lambda).$
To see that the coefficients $\{c_{\lambda_k}\}_{\lambda_k \in \Lambda}$ in (\ref{des}) are unique, we observe that since
$|\varphi(t)|\geq A \,\text{ a.e. } t\in E,$
\begin{equation}
\frac{{f}}{\varphi} = \sum_{k \in K}
c_{\lambda_k}\psi_k\,\,\,\in L^2(E).
\end{equation}
The result follows using that $\{\psi_k\}_{k \in K}$ is a Riesz basis of $L^2(E).$
\end{proof}

\begin{proposition} \label{bessel}

Let  $\varphi$ be a measurable function defined in $\RR^d.$ Let $\{\psi_k\}_{k \in K}$ be a frame of $L^2(E).$ Then
$\{\varphi\psi_k\}_{k \in K}$ is a  Bessel sequence of
$L^2(E)$ if and only if there exists a constant $B > 0$ such
that
\begin{equation}\label{constb}
|\varphi(t)|\leq B \,\,\,\,\,\text{ a.e. } t\in E.
\end{equation}

\end{proposition}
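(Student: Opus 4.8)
The statement is the Bessel analogue of Proposition~\ref{fr}, so the natural strategy is to mimic that proof, dropping everything related to the lower bound. Concretely, I would argue both implications directly.

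For the forward direction, suppose $\{\varphi\psi_k\}_{k\in K}$ is a Bessel sequence of $L^2(E)$ with Bessel bound $\beta$, while $\{\psi_k\}_{k\in K}$ is a frame with lower bound $m>0$. I would argue by contradiction, assuming that for every $B>0$ there is a set of positive measure where $|\varphi|>B$. As in Proposition~\ref{fr}, set $E_s=\{t\in E:|\varphi(t)|>s\}$, note $|E_s|>0$, and define the normalized indicators $f_s$ as in \eqref{sec:fnnn1}. Then
$$
\sum_{k\in K}|\langle f_s,\varphi\psi_k\rangle|^2=\sum_{k\in K}|\langle f_s\overline{\varphi},\psi_k\rangle|^2\geq m\,\|f_s\varphi\|_2^2=\frac{m}{|E_s|}\int_{E_s}|\varphi|^2\geq m\,s^2\longrightarrow+\infty,
$$
contradicting the Bessel bound $\beta$. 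Hence some $B>0$ works and \eqref{constb} holds. This is essentially identical to the second half of the $\Longrightarrow$ direction of Proposition~\ref{fr}, so there is no new obstacle.

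For the converse, assume $|\varphi(t)|\leq B$ a.e.\ on $E$ and that $\{\psi_k\}_{k\in K}$ is a frame with upper bound $M$. Then $\varphi\in L^\infty(E)$, so $f\overline{\varphi}\in L^2(E)$ for every $f\in L^2(E)$ with $\|f\overline{\varphi}\|_2\leq B\|f\|_2$. Using the frame (hence Bessel) property of $\{\psi_k\}$,
$$
\sum_{k\in K}|\langle f,\varphi\psi_k\rangle|^2=\sum_{k\in K}|\langle f\overline{\varphi},\psi_k\rangle|^2\leq M\,\|f\overline{\varphi}\|_2^2\leq M B^2\,\|f\|_2^2,
$$
so $\{\varphi\psi_k\}_{k\in K}$ is Bessel with bound $MB^2$.

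The only point worth a remark is that the hypothesis ``$\{\psi_k\}$ is a frame'' is stronger than needed in each direction: the forward implication really only uses the \emph{lower} frame inequality for $\{\psi_k\}$, and the converse only the \emph{upper} (Bessel) inequality. I would keep the statement as is for uniformity with Propositions~\ref{fr}, \ref{tfr} and \ref{riesz}, but could add a remark. I do not anticipate any genuine obstacle here; the proof is a routine specialization of the frame case, and the estimates above are complete up to the trivial verification that $\|f_s\|_2=1$ and $|E_s|>0$, which is exactly as in Proposition~\ref{fr}.
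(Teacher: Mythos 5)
Your proof is correct and is essentially the paper's proof: the paper's entire argument for Proposition~\ref{bessel} is the one-line remark ``apply the same arguments as in the proof of Proposition~\ref{fr} part 1,'' and your writeup simply carries out exactly those arguments (the $E_s$ contradiction for necessity, the $\|f\overline{\varphi}\|_2\le B\|f\|_2$ estimate for sufficiency), with the correct observation that only the lower frame bound of $\{\psi_k\}$ is used in one direction and only the upper in the other.
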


\begin{proof}
One can apply the same arguments as in proof of Proposition~\ref{fr} part 1.
\end{proof}

\begin{proposition} \label{conv}
Let  $\varphi$ be a measurable function defined in $\RR^d.$ If
there exist constants $A$ and $B$ such
that
\begin{equation}\label{bound}
0<A \leq B<+ \infty \,\,\,\,\text{ and }\,\, A\leq
|\varphi(t)|\leq B \,\,\,\,\,\text{ a.e. } t\in E,
\end{equation}

and $\{\varphi\psi_k\}_{k \in K}$ is a frame of $L^2(E),$
then $\{\psi_k\}_{k \in K}$ is a frame of $L^2(E).$
\end{proposition}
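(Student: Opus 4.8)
The plan is to exploit the symmetry of the hypothesis: if $\varphi$ is bounded above and below in absolute value a.e. on $E$, then so is $1/\varphi$, with the roles of $A$ and $B$ swapped. Concretely, set $\phi := 1/\varphi$, which is a measurable function on $\RR^d$ (defined, say, as $1/\varphi(t)$ on $E$ where $\varphi \neq 0$ and arbitrarily elsewhere), and observe that \eqref{bound} gives $1/B \leq |\phi(t)| \leq 1/A$ a.e. $t \in E$, so $\phi$ also satisfies a two-sided bound of the form \eqref{const}.

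Next I would apply Proposition~\ref{fr}, part 1, to the frame $\{\varphi\psi_k\}_{k\in K}$ together with the multiplier $\phi$. By hypothesis $\{\varphi\psi_k\}_{k\in K}$ is a frame of $L^2(E)$, and $\phi$ satisfies the required bounds, so Proposition~\ref{fr} yields that $\{\phi\cdot(\varphi\psi_k)\}_{k\in K}$ is a frame of $L^2(E)$. Since $\phi\varphi = 1$ a.e. on $E$, we have $\phi\cdot(\varphi\psi_k) = \psi_k$ as elements of $L^2(E)$ for every $k$, and therefore $\{\psi_k\}_{k\in K}$ is a frame of $L^2(E)$, as claimed.

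One small technical point to address cleanly in the write-up: Proposition~\ref{fr} part 1 is stated for a multiplier applied to a frame, giving an ``if and only if'' — here I only need the ``$\Longleftarrow$'' direction (two-sided bounds on the multiplier plus frame hypothesis on the base sequence imply the product is a frame), so the invocation is straightforward. I should also make sure the equality $\phi\cdot(\varphi\psi_k)=\psi_k$ is understood in $L^2(E)$, i.e. the null set $\{t\in E : \varphi(t)=0\}$ is irrelevant since it has measure zero by \eqref{bound}.

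I do not expect a genuine obstacle here; the statement is essentially the observation that the multiplication operation in Proposition~\ref{fr} is invertible under the two-sided bound, and the only thing requiring a line of care is the bookkeeping of constants for $1/\varphi$ and the a.e.\ identification of $\phi\varphi$ with the constant function $1$ on $E$.
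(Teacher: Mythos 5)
Your proposal is correct and is essentially the paper's own argument: the paper's proof carries out exactly the ``$\Longleftarrow$'' computation of Proposition~\ref{fr} inline with the multiplier $1/\varphi$ (writing $\psi_k = \frac{1}{\varphi}\varphi\psi_k$ and using $1/B \leq |1/\varphi| \leq 1/A$ a.e.\ on $E$), whereas you package the same step as a citation of that proposition. The bookkeeping points you flag (swapped constants, the measure-zero set where $\varphi$ vanishes) are handled implicitly in the paper in the same way.
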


\begin{proof}

Let $\alpha$ and $\beta$ be the lower respectively the upper frame bound of $\{\varphi\psi_k\}_{k \in K}.$
Since  $\{\varphi\psi_k\}_{k \in K}$ is a frame, for every $f\in L^2(E)$ we can write

\begin{align*}
\sum_{k \in K}|<f,\psi_k>|^2 & =\sum_{k \in K}|<f,\frac{1}{\varphi} \varphi \psi_k>|^2=
\sum_{k \in K}|<f\frac{1}{\overline{\varphi}},\varphi \psi_k>|^2\\
& \leq
\beta \|f\frac{1}{\overline{\varphi}}\|^2\leq \frac{\beta}{A^2} \|f\|^2.
\end{align*}

Analogously we obtain

$$\sum_{k \in K}|<f,\psi_k>|^2\geq \frac{\alpha}{B^2}\|f\|^2.$$
\end{proof}

\begin{observation}
\begin{itemize}
\item Clearly Proposition~\ref{conv} remains true if we replace "frame" by "Riesz basis".
\item If we replace
condition~(\ref{bound}) by the condition that there exists a positive constant $A$ such that
\begin{equation}
A\leq |\varphi(t)|  \,\,\,\,\,\text{ a.e. } t\in E
\end{equation}
then Proposition~\ref{conv} is also true when we replace "frame" by "Bessel sequence".
\end{itemize}
\end{observation}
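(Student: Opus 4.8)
The plan is to reuse, essentially verbatim, the chain of inequalities in the proof of Proposition~\ref{conv}, the point being that that argument rests only on the trivial identity $\psi_k=\tfrac{1}{\varphi}\,(\varphi\psi_k)$ together with the fact that multiplication by $1/\varphi$ is a bounded operation on $L^2(E)$ as soon as $\varphi$ is bounded below on $E$; the upper bound on $\varphi$ is needed only to control $\|\varphi f\|$, which matters for the Riesz statement but not for the Bessel one.

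For the first bullet, suppose $0<A\le B<+\infty$, $A\le|\varphi(t)|\le B$ a.e. on $E$, and $\{\varphi\psi_k\}_{k\in K}$ is a Riesz basis of $L^2(E)$. I would first observe that the multiplication operator $f\mapsto\varphi f$ maps $L^2(E)$ boundedly into itself (with norm at most $B$) and is boundedly invertible, the inverse being $f\mapsto f/\varphi$ (with norm at most $1/A$); hence it is a topological isomorphism of $L^2(E)$. Since the image of a Riesz basis under a topological isomorphism of the space is again a Riesz basis, applying $f\mapsto f/\varphi$ to $\{\varphi\psi_k\}_{k\in K}$ shows that $\{\psi_k\}_{k\in K}$ is a Riesz basis of $L^2(E)$. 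An alternative, completely elementary route is to verify the two Riesz inequalities directly: for any finite scalar sequence $\{c_k\}$ write $\sum_k c_k\psi_k=\frac{1}{\varphi}\sum_k c_k\,\varphi\psi_k$, use $B^{-1}\le|1/\varphi(t)|\le A^{-1}$ a.e. on $E$ to get $B^{-2}\,\|\sum_k c_k\,\varphi\psi_k\|^2\le\|\sum_k c_k\psi_k\|^2\le A^{-2}\,\|\sum_k c_k\,\varphi\psi_k\|^2$, and then insert the Riesz bounds of $\{\varphi\psi_k\}_{k\in K}$; completeness of $\{\psi_k\}_{k\in K}$ is automatic because, by Proposition~\ref{conv} applied to the frame $\{\varphi\psi_k\}_{k\in K}$, the family $\{\psi_k\}_{k\in K}$ is in particular a frame of $L^2(E)$.

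For the second bullet, suppose only that $A\le|\varphi(t)|$ a.e. on $E$ for some $A>0$ and that $\{\varphi\psi_k\}_{k\in K}$ is a Bessel sequence of $L^2(E)$ with bound $\beta$. Given $f\in L^2(E)$, the function $f/\overline{\varphi}$ satisfies $|f/\overline{\varphi}|\le A^{-1}|f|$ a.e., hence lies in $L^2(E)$ with $\|f/\overline{\varphi}\|\le A^{-1}\|f\|$. Then $\langle f,\psi_k\rangle=\langle f,\tfrac{1}{\varphi}\,\varphi\psi_k\rangle=\langle f/\overline{\varphi},\varphi\psi_k\rangle$, so $\sum_k|\langle f,\psi_k\rangle|^2=\sum_k|\langle f/\overline{\varphi},\varphi\psi_k\rangle|^2\le\beta\,\|f/\overline{\varphi}\|^2\le\beta A^{-2}\,\|f\|^2$, i.e. $\{\psi_k\}_{k\in K}$ is Bessel with bound $\beta A^{-2}$; this is precisely the upper half of the computation in the proof of Proposition~\ref{conv}, which indeed never used the upper bound on $\varphi$. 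I do not expect any genuine obstacle here: the only step requiring a moment's attention is checking that $f/\overline{\varphi}$ (respectively $\sum_k c_k\psi_k$) truly belongs to $L^2(E)$ before the frame/Riesz/Bessel inequality is applied, and that is exactly where the hypothesis on $\varphi$ is used.
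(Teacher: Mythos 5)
Your proof is correct, and it follows exactly the route the paper intends: the Observation is stated without proof precisely because the argument is the chain of inequalities from Proposition~\ref{conv}, with the added remarks that the multiplication operator $f\mapsto \varphi f$ is a topological isomorphism of $L^2(E)$ under the two-sided bound (hence preserves the Riesz basis property), and that the Bessel (upper) estimate only ever uses the lower bound $A\leq|\varphi|$. Nothing is missing; your version simply writes out what the paper leaves as ``clear''.
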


The results can also be extended to frame sequences:

\begin{proposition}\label{framseq}
 Let $\{\psi_k\}_{k\in K}$ be a frame of $L^2(E)$ and $\varphi\in L^2(\RR^d)$ such that $\{\varphi\psi_k\}$ is in $L^2(E).$ Let $F := {\rm supp} (\varphi) \cap E$.
Then
\begin{enumerate}
\item 
$\overline{\rm span} \left\{ \varphi \psi_k \right\} = \widetilde{ L^2 (F)}^{(E)}$.

\item $\{\varphi\psi_k\}_{k\in K}$ is a frame sequence of
$L^2(E)$ if and only if there exist constants $A$ and $B$ such
that
\begin{equation}\label{constframseq}
0<A \leq B<+ \infty \,\,\,\,\text{ and }\,\, A\leq
|\varphi(t)|\leq B \,\,\,\,\,\text{ a.e. } t\in F.
\end{equation}

\item Let $\varphi$ be compactly supported. Then $\{\varphi\psi_k\}_{k\in K}$ is a frame sequence of
$L^2(\RRd)$ if and only if there exist constants $A$ and $B$ such
that
\begin{equation}
0<A \leq B<+ \infty \,\,\,\,\text{ and }\,\, A\leq
|\varphi(t)|\leq B \,\,\,\,\,\text{ a.e. } t\in F.
\end{equation}

\end{enumerate}
\end{proposition}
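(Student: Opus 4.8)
The plan is to reduce all three parts to the tools already at hand — Lemma~\ref{compl} and Proposition~\ref{fr} — by transporting the whole situation from $E$ to the set $F={\rm supp}(\varphi)\cap E$, where $\varphi$ does not vanish. For part~1, first note that $\varphi$ vanishes a.e.\ outside $F$, so each $\varphi\psi_k$ vanishes a.e.\ on $E\setminus F$ and hence lies in $\widetilde{L^2(F)}^{(E)}$; this gives $\overline{\rm span}\{\varphi\psi_k\}\subseteq\widetilde{L^2(F)}^{(E)}$. For the reverse inclusion I would show that $\{\varphi\psi_k\}$ is complete in $\widetilde{L^2(F)}^{(E)}$. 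The orthogonal projection of $L^2(E)$ onto $\widetilde{L^2(F)}^{(E)}$ is multiplication by $\chi_F$, and the orthogonal projection of a frame onto a closed subspace is again a frame (hence complete) for that subspace, since $\langle f,\chi_F\psi_k\rangle=\langle f,\psi_k\rangle$ when $f=\chi_F f$; thus $\{\chi_F\psi_k\}$ is a frame for $\widetilde{L^2(F)}^{(E)}\cong L^2(F)$. Since $\varphi\psi_k=\varphi\,(\chi_F\psi_k)\in\widetilde{L^2(F)}^{(E)}$ and $|\{t\in F:\varphi(t)=0\}|=0$ by the definition of $F$, Lemma~\ref{compl}, applied with $E$ and $\{\psi_k\}$ replaced by $F$ and $\{\chi_F\psi_k\}$, shows that $\{\varphi\psi_k\}$ is complete in $\widetilde{L^2(F)}^{(E)}$; together with the inclusion this yields $\overline{\rm span}\{\varphi\psi_k\}=\widetilde{L^2(F)}^{(E)}$.

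For part~2, by part~1 the sequence $\{\varphi\psi_k\}$ is a frame sequence of $L^2(E)$ exactly when it is a frame of $\widetilde{L^2(F)}^{(E)}$. Transporting this along the isometric isomorphism $f\mapsto f|_F$ between $\widetilde{L^2(F)}^{(E)}$ and $L^2(F)$, which sends $\chi_F\psi_k$ to $\psi_k|_F$ and $\varphi\psi_k$ to $\varphi|_F\,\psi_k|_F$, the condition becomes that $\{\varphi|_F\,\psi_k|_F\}$ is a frame of $L^2(F)$, where $\{\psi_k|_F\}$ is a frame of $L^2(F)$. By Proposition~\ref{fr}, part~1, applied with $E$ and $\{\psi_k\}$ replaced by $F$ and $\{\psi_k|_F\}$, this holds if and only if there are constants $0<A\le B<+\infty$ with $A\le|\varphi(t)|\le B$ a.e.\ $t\in F$, i.e.\ \eqref{constframseq}.

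For part~3, since $L^2(E)=\widetilde{L^2(E)}^{(\RR^d)}$ is a closed subspace of $L^2(\RR^d)$ that contains every $\varphi\psi_k$ (with $\varphi$ compactly supported, $F$ is bounded and $\varphi\psi_k$ is supported on it), the closed linear span of $\{\varphi\psi_k\}$ in $L^2(\RR^d)$ coincides with its closed linear span in $L^2(E)$, which by part~1 equals $\widetilde{L^2(F)}^{(E)}$. Being a frame sequence is an intrinsic property of the system, independent of the ambient Hilbert space, so $\{\varphi\psi_k\}$ is a frame sequence of $L^2(\RR^d)$ if and only if it is a frame of $\widetilde{L^2(F)}^{(E)}$, if and only if it is a frame sequence of $L^2(E)$, and part~2 applies verbatim.

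The step that needs care is part~1: one has to identify $\overline{\rm span}\{\varphi\psi_k\}$ with the full space $\widetilde{L^2(F)}^{(E)}$, which forces $F$ to be (up to a null set) the set $\{t\in E:\varphi(t)\ne0\}$ so that the nonvanishing hypothesis of Lemma~\ref{compl} is available on $F$, and one has to push the frame property of $\{\psi_k\}$ down from $L^2(E)$ to $L^2(F)$ through the projection $\chi_F$. Once that is done, parts~2 and~3 are routine reductions to Proposition~\ref{fr}.
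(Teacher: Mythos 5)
Your proof is correct and follows essentially the same route as the paper: establish part~1 via the trivial inclusion plus completeness from Lemma~\ref{compl} applied on $F$, then reduce parts~2 and~3 to Proposition~\ref{fr} on $L^2(F)$ through the isomorphism $\widetilde{L^2(F)}^{(E)}\cong L^2(F)$. In fact you make explicit a step the paper leaves implicit, namely that $\{\chi_F\psi_k\}$ is a frame (hence complete) for $\widetilde{L^2(F)}^{(E)}$ because orthogonal projections of a frame onto a closed subspace form a frame for it, which is exactly what licenses applying Lemma~\ref{compl} and Proposition~\ref{fr} with $E$ replaced by $F$.
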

\begin{proof}
\

\begin{enumerate}
\item
Clearly for each $k\in K$ we have  $\varphi \psi_k \in \widetilde{ L^2 (F)}^{(E)}$, and so
$$V := \overline{\rm span} \left\{ \varphi \psi_k :k\in K \right\}
\subseteq \widetilde{ L^2 (F)}^{(E)} $$
as this is a closed subspace.

On the other hand due to Lemma \ref{compl} $\overline{\rm span}\left\{ \varphi \psi_k: k \in K \right\} = L^2(F) \cong \widetilde{ L^2 (F)}^{(E)}$. Therefore $V = \widetilde{ L^2 (F)}^{(E)}$.

\item

Using (1.) the second part is equivalent to
$\{\varphi\psi_k\}_{k\in K}$ is a frame for
$\widetilde{ L^2 (F)}^{(E)} \cong L^2 (F)$ if and only if there exist constants $A$ and $B$ such
that
$$ 0<A \leq B<+ \infty \,\,\,\,\text{ and }\,\, A\leq
|\varphi(t)|\leq B \,\,\,\,\,\text{ a.e. } t\in F.
$$

This is just Proposition \ref{fr} applied to  $L^2 (F)$.

\item

As $F$ is bounded, just choose a bounded set $E\supset  F$ and apply part 2. Note that
for this $E \subseteq \RRd$ we have  $\widetilde{ L^2 (F)}^{(E)} \cong  \widetilde{ L^2 (F)}^{(\RRd)}$.
\end{enumerate}
\end{proof}

\section{Application to Frames of Translates} \label{sec:appframtransl0}

Now we apply the results proved above for the special case of exponential functions and the Fourier transform of a generator to arrive at results for frames of translates.

We denote
\begin{equation}
P_E=\left\{ f \in L^2(\RRd) : {\rm supp \ } \hat f \subseteq E
\right\}.\label{eqpe}
\end{equation}
\begin{theorem}\label{pe}
Let $\Lambda=\{\lambda_k\}_{k\in K}\subseteq\RR^d$ such that
$\{e_{\lambda_k}\}_{k\in K}$ is a frame for $L^2(E).$ Let $h \in
P_E.$ Then
\begin{enumerate}
\item $\{T_{\lambda_k} h\}_{k\in K}$ is a Bessel sequence in $L^2
\left(\RRd\right)$ if and only if there exists $B > 0$ such that
$\left| \hat h (\omega) \right| \le B$ a.e.

\item $\{T_{\lambda_k} h\}_{k\in K}$ is a frame for $P_E$ if and only if there
exist $B \ge A > 0$ such that  $A \le \left| \hat h (\omega) \right|
\le B$ for a.e. $\omega \in E$.

\item $\{T_{\lambda_k} h\}_{k\in K}$ is a frame sequence in $L^2
\left(\RRd\right)$ if and only if there exist $B \ge A > 0$ such
that  $A \le \left| \hat h (\omega) \right| \le B$ for a.e. $\omega
\in {\rm supp \ } \hat h$.
\end{enumerate}

\end{theorem}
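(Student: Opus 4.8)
The plan is to transport everything to the Fourier side, where translation becomes modulation and the results of Section~\ref{sec:framesmultipl} apply directly. Recall that the Fourier transform is a unitary operator on $L^2(\RRd)$, and that $\widehat{T_{\lambda_k} h} = e_{\lambda_k} \hat h$. Since $h \in P_E$, we have $\hat h \in \widetilde{L^2(E)}^{(\RRd)}$, which is isometrically isomorphic to $L^2(E)$ via restriction. Under this identification, the sequence $\{\widehat{T_{\lambda_k} h}\}_{k \in K}$ in $L^2(\RRd)$ corresponds to $\{e_{\lambda_k} \cdot (\hat h|_E)\}_{k \in K}$ in $L^2(E)$, i.e. to the sequence obtained by multiplying the frame of exponentials $\{e_{\lambda_k}\}_{k \in K}$ of $L^2(E)$ by the fixed function $\varphi := \hat h|_E$. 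So all three parts become statements about $\{\varphi \, \psi_k\}$ with $\psi_k = e_{\lambda_k}$.

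With this dictionary in place, each part is an immediate citation. For part (1), a Bessel sequence in $L^2(\RRd)$ is the same (via the unitary $\FF$) as a Bessel sequence in $L^2(E)$, so this is Proposition~\ref{bessel}: $\{\varphi \psi_k\}$ is Bessel in $L^2(E)$ iff $|\varphi(t)| \le B$ a.e.\ on $E$; and since $\hat h$ vanishes off $E$, the condition $|\varphi| \le B$ a.e.\ on $E$ is the same as $|\hat h(\omega)| \le B$ a.e.\ on $\RRd$. For part (2), being a frame for $P_E$ is, after applying $\FF$, the same as being a frame for $\widetilde{L^2(E)}^{(\RRd)} \cong L^2(E)$; this is exactly Proposition~\ref{fr} part~1, giving the condition $A \le |\hat h(\omega)| \le B$ a.e.\ on $E$. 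For part (3), being a frame sequence in $L^2(\RRd)$ corresponds under $\FF$ to being a frame sequence in $L^2(\RRd)$ for the Fourier-side family; here I would note that $\hat h$ is compactly supported (as $\mathrm{supp}\,\hat h \subseteq E$ and $E$ is bounded), set $F := \mathrm{supp}(\varphi) \cap E = \mathrm{supp}\,\hat h$, and invoke Proposition~\ref{framseq} part~3, which yields precisely $A \le |\hat h(\omega)| \le B$ a.e.\ on $\mathrm{supp}\,\hat h$.

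The only genuine points requiring care — rather than a true obstacle — are bookkeeping ones. First, I must check the hypotheses of the Section~\ref{sec:framesmultipl} propositions actually hold: that $\varphi = \hat h|_E$ is a measurable function on $\RRd$ (extend by zero — it is, since $h \in L^2$), and for the frame-sequence part that $\varphi \in L^2(\RRd)$ with $\{\varphi \psi_k\} = \{\varphi e_{\lambda_k}\} \subseteq L^2(E)$, both of which follow from $\hat h \in L^2(\RRd)$, supp$\,\hat h \subseteq E$, and $|e_{\lambda_k}| = 1$. Second, I should be explicit that the unitarity of $\FF$ preserves each of the three properties (Bessel, frame for a subspace, frame sequence) together with the constants, so the "iff" passes through cleanly; this is standard but worth one sentence. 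Third, in part~(2) one should confirm that $\FF(P_E)$ is exactly $\widetilde{L^2(E)}^{(\RRd)}$, which is immediate from the definition \eqref{eqpe}. There is no hard analytic step left once the Fourier transform has been applied; the real work was done in Section~\ref{sec:framesmultipl}, and this theorem is the specialization $\psi_k = e_{\lambda_k}$, $\varphi = \hat h$.
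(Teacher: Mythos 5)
Your proposal is correct and follows essentially the same route as the paper: the paper's proof likewise uses Plancherel and $\widehat{T_{\lambda_k}h}=e_{\lambda_k}\hat h$ to reduce each statement to the multiplication results of Section~\ref{sec:framesmultipl} (Propositions~\ref{bessel} and~\ref{framseq}, with Proposition~\ref{fr} underlying the frame case), including the same observation that ${\rm supp}\,\hat h\subseteq E$ upgrades the bound on $E$ to an a.e.\ bound on $\RRd$ in part~(1). The bookkeeping points you flag (unitarity of $\FF$, the identification $\FF(P_E)=\widetilde{L^2(E)}^{(\RRd)}$, and the inequality $\|\hat f\|_{L^2(E)}\le\|\hat f\|_{L^2(\RRd)}$ needed to pass between Bessel bounds on $L^2(E)$ and on $L^2(\RRd)$) are exactly the steps the paper carries out explicitly.
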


\begin{proof}
\

\begin{enumerate}
\item Let $f \in L^2(\RRd),$
\begin{equation} \label{sec:bessframtrans2}  \sum \limits_{k\in K} \left| \left< f , T_{\lambda_k} h \right>_{L^2 \left(\RRd\right)} \right|^2  =
\sum \limits_{k\in K}
\left| \left< \hat f , e_{\lambda_k} \hat h \right>_{L^2(E)} \right|^2.
\end{equation}

Assume $\{T_{\lambda_k} h\}_{k\in K}$ is a Bessel sequence in $L^2
\left(\RRd\right).$ Then there is a $\beta>0$ such that $\eqref{sec:bessframtrans2}\leq
\beta \left\| \hat f \right\|_{L^2(\RRd)}$ for every $f\in
L^2(\RRd).$ For $g\in L^2(E),$ we know that there exists an $f\in
P_E$ such that $g=\hat{f}_{|_E}\,\,a.e.$ Hence we have that $$\sum
\limits_{k\in K} \left| \left< g , e_{\lambda_k} \hat h
\right>_{L^2(E)} \right|^2 \leq \beta \left\| \hat{f}
\right\|_{L^2(\RRd)}=\beta \left\| g \right\|_{L^2(E)}
\,\,\,\text{for every}\,\,\, g\in L^2(E).$$ So, by
Proposition~\ref{bessel} there exists $B > 0$ such that $\left| \hat
h (\omega) \right| \le B$ for a.e $\omega \in E $. As ${ \rm supp \
} \hat h \subseteq E$ this implies that $\left| \hat h (\omega)
\right| \le B$ a.e.

For the other implication, assume $\left| \hat h (\omega) \right| \le B$ a.e.
By Proposition~\ref{bessel} we have that \eqref{sec:bessframtrans2} $\leq B \left\| \hat{f}
\right\|_{L^2(E)}\leq B \left\| \hat f
\right\|_{L^2(\RRd)}=B \left\| f\right\|_{L^2(\RRd)}$ for every $f\in L^2(\RRd),$ i.e.
$\{T_{\lambda_k} h\}_{k\in K}$ is a Bessel sequence in $L^2
\left(\RRd\right).$

\item \& 3.

For $f \in P_E$ we have
$$  \sum \limits_{k\in K} \left| \left< f , T_{\lambda_k} h \right>_{P_E} \right|^2  =
\sum \limits_{k\in K} \left| \left< \hat f , e_{\lambda_k} \hat h
\right>_{L^2(E)} \right|^2 . $$ Note that $\left\| f \right\|_{P_E}
= \left\| \hat f \right\|_{ L^2 (E)}$. So the statements are a
direct consequence of Proposition~\ref{bessel} and
Proposition~\ref{framseq}.

\end{enumerate}

\end{proof}

This implies the following interesting Corollary.

\begin{cor.}\label{cor} Let $h\in P_E$ such that $\hat h$ is continuous. Then there does not exist $\Lambda=\{\lambda_k\}_{k\in K} \subseteq \RRd$
such that $\{h(\cdot-\lambda_k)\}_{k\in K}$ is a frame of
$P_E.$
\end{cor.}

\bigskip

\noindent{\bf Example:}

\noindent Consider the Paley Wiener space

$$P_{1/2}=\left\{ f \in L^2(\RR) : {\rm supp \ } \hat f \subseteq
\left[-\frac{1}{2},\frac{1}{2}\right] \right\},$$

which is generated by $\psi(x)=\frac{\sin\pi x}{\pi x}.$

The translates $\{\psi(\cdot-k)\}_{k\in\ZZ}$ are an orthonormal
basis, in particular a frame for $P_{1/2}.$

If $h\in P_{1/2} \cap L^1(\RR)$ there does not exist
$\Lambda=\{\lambda_k\}_{k\in K}\subseteq\RR$ such that $\{h(\cdot-\lambda_k)\}_{k\in K}$ is a frame for $P_{\frac{1}{2}}.$

The result of  Corollary \ref{cor}  represents an obstacle  for applications, since any generator $h$ in
our construction will have poor decay and in consequence it will produce a big error if we need to truncate the
expansions in terms of  the translates of $h$.

However, this problem can be overcome for open bounded sets $E$ if we generalize a trick that appears in \cite{DD03} and \cite{ACM04} to this case. The price to pay for this,  is a little bit of oversampling.

\begin{theorem} \label{sec:gooddecayexpans1}

 Let $E\subset \RRd$ be an open bounded set and $\Lambda=\{\lambda_k\}_{k\in K}$ a separated sequence in $\RR^d$ such that the
 exponentials $\{e_{\lambda_k}\}_{k\in K}$ form a frame of $L^2(E)$.  Then, there exists a separated sequence $\Lambda'=\{\lambda'_k\}$ containing $\Lambda$ and a function $g$  of  the Schwartz class,  compactly supported in frequency, such that each function $f$ in $P_E$ has an expansion as
 \begin{equation}\label{expansion}
 f(x) = \sum_{k\in K} \alpha_{k} g(x-\lambda'_k),
 \end{equation}
 where the sequence of coefficients $\{\alpha_k\}_{k\in K}$ is in $l_2(\Lambda')$ and the series converges uniformly, unconditionally and in $L^2(\RRd)$.
 \end{theorem}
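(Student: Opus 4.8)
The plan is to enlarge $E$ slightly to an open bounded set $E'$ with $\overline{E}\subset E'$ and $|E'|$ close to $|E|$, choose a Schwartz function $g$ whose Fourier transform is a smooth bump equal to $1$ on $E$ and supported in $E'$, and then transfer a reconstruction formula for $P_{E}$ through multiplication by $\hat g$, using the machinery of Section 3. Concretely, first fix $\varepsilon>0$ with $\overline{E}\subset E'$, $E'$ open and bounded; by Urysohn-type smoothing take $\theta\in C_c^\infty(\RRd)$ with $0\le\theta\le 1$, $\theta\equiv 1$ on $E$, $\mathrm{supp}\,\theta\subset E'$, and set $g$ to be the inverse Fourier transform of $\theta$, so $g$ lies in the Schwartz class and is compactly supported in frequency, with $\hat g=\theta$.

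Next I would produce a frame of exponentials on the larger set. Since $E'$ is bounded, it is contained in a ball $B_r(0)$; by the Beurling density results reviewed above (Theorem~\ref{beurlingthm}, together with the remark that one may enlarge a given separated $\Lambda$), one can adjoin finitely-or-countably many points to $\Lambda$ to obtain a separated sequence $\Lambda'=\{\lambda'_k\}_{k\in K}\supseteq\Lambda$ whose gap is small enough that $\{e_{\lambda'_k}\chi_{E'}\}_{k\in K}$ is a frame for $L^2(E')$. (If one prefers, one can instead invoke the hypothesis that $\{e_{\lambda_k}\}$ is already a frame of $L^2(E)$ and note that extending the index set only adds vectors, so one needs the extension to make it a frame of the \emph{larger} $L^2(E')$; this is exactly where Beurling is used, and where openness of $E$ enters to guarantee $\overline E\subset E'$ with room to spare.)

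Now apply Section 3 with $\varphi=\hat g=\theta$ and $\psi_k=e_{\lambda'_k}\chi_{E'}$. On $E'$, $\theta$ is bounded, and $F:=\mathrm{supp}\,\theta\cap E'=\mathrm{supp}\,\theta$; on $E$ we have $\theta\equiv 1$, hence $\theta$ is bounded above and below by positive constants a.e.\ on $E$. By Proposition~\ref{framseq}(2), $\{\theta\, e_{\lambda'_k}\chi_{E'}\}_{k\in K}$ is a frame sequence for $L^2(E')$ spanning $\widetilde{L^2(F)}^{(E')}$, which contains $\widetilde{L^2(E)}^{(E')}$. Taking Fourier transforms back: $\widehat{T_{\lambda'_k}g}=e_{\lambda'_k}\hat g=e_{\lambda'_k}\theta$, so $\{T_{\lambda'_k}g\}_{k\in K}$ is a frame sequence in $L^2(\RRd)$ whose closed span $V$ contains $P_E$. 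Let $S$ be the frame operator of this frame sequence on $V$; then for $f\in P_E\subseteq V$ we get the expansion $f=\sum_{k\in K}\langle f, S^{-1}T_{\lambda'_k}g\rangle\,T_{\lambda'_k}g$, i.e.\ $f(x)=\sum_k\alpha_k\,g(x-\lambda'_k)$ with $\{\alpha_k\}\in \ell^2(\Lambda')$, converging unconditionally in $L^2(\RRd)$; uniform convergence follows because $g$ is Schwartz (each partial sum is continuous and the tail is controlled in $L^\infty$ via the rapid decay of $g$ together with separatedness of $\Lambda'$ and $\ell^2$ coefficients, giving a uniformly convergent majorant).

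The main obstacle is the construction and justification of $\Lambda'$: one must be sure that adding points to $\Lambda$ can be done keeping the sequence \emph{separated} while decreasing the gap enough to cover $E'$ (not just $E$) in the sense of Theorem~\ref{beurlingthm}, and that the resulting larger family is genuinely a frame of $L^2(E')$ and not merely of $L^2(E)$. This is precisely why $E$ is assumed open (so $\overline E$ is a compact strict subset of a slightly larger open $E'$) and why "a little oversampling'' is the price paid. The rest — smooth bump, multiplication lemmas, Fourier transfer, and the standard frame-sequence reconstruction with Schwartz-decay upgrade of the convergence — is routine given Section 3 and the Beurling theorems quoted above.
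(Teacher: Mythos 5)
Your setup (the smooth bump $\hat g=\theta$ equal to $1$ on $E$ and supported in a slightly larger open set, and the enlargement of $\Lambda$ to a separated $\Lambda'$ whose exponentials form a frame of $L^2$ of the larger set) coincides with the paper's. But the final reconstruction step contains a genuine error: you invoke Proposition~\ref{framseq}(2) to conclude that $\{\theta\, e_{\lambda'_k}\chi_{E'}\}_{k\in K}$ is a frame sequence, and hence that $\{T_{\lambda'_k}g\}$ is a frame sequence in $L^2(\RRd)$ with a frame operator $S$ on its closed span $V\supseteq P_E$. Proposition~\ref{framseq}(2) is an equivalence, and its condition fails here: on $F=\mathrm{supp}\,\theta$, the continuous function $\theta$ decreases from $1$ to $0$, so for every $A>0$ the set $\{t\in F: 0<\theta(t)<A\}$ has positive measure and $\theta$ is \emph{not} bounded below on $F$. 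So the proposition yields exactly the opposite conclusion: $\{T_{\lambda'_k}g\}$ is \emph{not} a frame sequence (this is also the content of Corollary~\ref{cor} and of the remark the paper makes immediately after the theorem). Consequently $S^{-1}$ does not exist and the expansion $f=\sum_k\langle f,S^{-1}T_{\lambda'_k}g\rangle T_{\lambda'_k}g$ is unjustified. Note also that knowing only that the projections $\mathcal{P}_{P_E}(T_{\lambda'_k}g)$ form a frame of $P_E$ (the outer-frame property) would give an expansion of $f$ in the \emph{projections}, not in the translates themselves, so that route does not repair the step either.

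The paper avoids this trap by never treating $\{T_{\lambda'_k}g\}$ as a frame sequence. Instead, for $f\in P_E\subset P_{E_\delta}$ it expands $\hat f$ in the exponential frame of $L^2(E_\delta)$, obtaining $\hat f=\sum_k\alpha_k e_{\lambda'_k}\chi_{E_\delta}$ with $\{\alpha_k\}\in\ell^2$ and unconditional convergence; since the limit $\hat f$ vanishes on $E_\delta\setminus E$ and $\hat g\equiv 1$ on $E$, multiplying the series termwise by the bounded function $\hat g$ (a continuous operation on $L^2$) still converges to $\hat f\hat g=\hat f$, giving $\hat f=\sum_k\alpha_k e_{\lambda'_k}\hat g$ and, after the inverse Fourier transform, $f=\sum_k\alpha_k g(\cdot-\lambda'_k)$. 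Your concluding Cauchy--Schwarz argument for uniform convergence is fine and matches the paper's, but it rests on an $L^2$ expansion that your argument, as written, does not establish.
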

\begin{proof}
 For a small $\delta > 0$
let $E_{\delta} =\{x\in \RRd: d(x,E) < \delta\}$.

Consider a function $g \in L^2(\RRd)$ such that its Fourier transform $\hat g$ satisfies:
$\hat g$ is of class $C^\infty$ ,  $ 0 \le \hat g \le 1$,
$\hat g (\omega) = 1$ for $\omega \in E$, and $\hat g = 0$ for $\omega  \in \RRd \setminus E_{\delta}.$ Then $\hat g$ is a Schwartz class function, and therefore $g$ is, too.

Now we chose another separated sequence $\Lambda'$ that on one side
contains  $\Lambda$, but on the other hand satisfies that the
associated exponentials form a frame of $L^2(E_{\delta})$. This can
be done by simply adding sufficient points to $\Lambda$, to decrease
its gap.


For $f \in P_E\subset P_{E_{\delta}}$,  using the frame expansion, we have that

$$ \hat f (\omega) = \sum_k \alpha_k \;e_{\lambda'_k}(\omega) \chi_{E_{\delta}}(\omega),
\quad a.e. \;\omega \in \RRd,$$ with unconditional convergence. Note
that since supp$(\hat f) \subset E$, this implies in particular,
that $\sum_k \alpha_k \;e_{\lambda'_k}(\omega)
\chi_{E_{\delta}}(\omega) = 0$ for $\omega \in E_{\delta}\setminus
E$.

Therefore, because of the choice of the properties of $g$ we can
write

\begin{equation}\label{ja}
 \hat f (\omega) = \sum_k \alpha_k \;e_{\lambda'_k}(\omega) \hat g(\omega),
\quad a.e. \;\omega \in \RRd.
\end{equation}

Taking the inverse Fourier transform in (\ref{ja}) we have

$$
  f (x) = \sum_k \alpha_k  \;g(x-\lambda'_k),
\quad a.e. \;x \in \RRd,
$$
where the  convergence is unconditional in $L^2(\RR).$

Moreover, by Cauchy Schwartz,
$$ \left| \sum_{|k| \leq N} \alpha_k  \;g(x-\lambda'_k)\ \right|^2 \leq \|\alpha\|_{2}^{2}
\sum_{|k| \leq N}  \;|g(x-\lambda'_k)|^2.$$
The uniform convergence is therefore a straightforward consequence of
the decay of $g$ and the fact that the sequence $\Lambda'$ is separated.

\end{proof}

Notice that the function $g$ is not in $P_E$ and its translates by elements in $\{\lambda'_k\}$
do not form a frame sequence. However its orthogonal projections on $P_E$ is a frame of $P_E$. 
\begin{proposition} With the assumptions of Theorem \ref{sec:gooddecayexpans1},
$\{T_{\lambda'_k}g\}$ is an {\em outer frame} for $P_E$.
\end{proposition}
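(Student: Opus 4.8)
The plan is to show that the orthogonal projection $\{\mathcal P_{P_E} T_{\lambda'_k} g\}_{k \in K}$ is a frame for $P_E$ by computing the projection explicitly in the Fourier domain and then invoking the results of Section~3. First I would observe that since $P_E = \{f \in L^2(\RR^d) : \operatorname{supp}\hat f \subseteq E\}$, the orthogonal projection onto $P_E$ is, on the Fourier side, nothing but multiplication by $\chi_E$; that is, $\widehat{\mathcal P_{P_E} u} = \hat u \cdot \chi_E$ for every $u \in L^2(\RR^d)$. Applying this to $u = T_{\lambda'_k} g$ and using $\widehat{T_{\lambda'_k} g} = e_{\lambda'_k} \hat g$, we get $\widehat{\mathcal P_{P_E} T_{\lambda'_k} g} = e_{\lambda'_k} \hat g \chi_E = e_{\lambda'_k} \chi_E$, because $\hat g \equiv 1$ on $E$ by the construction of $g$ in Theorem~\ref{sec:gooddecayexpans1}.

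Next I would translate the outer-frame condition into the Fourier domain. For $f \in P_E$ we have, by Parseval and the fact that $\mathcal P_{P_E}$ is self-adjoint with $f \in P_E$,
\begin{equation*}
\sum_{k \in K} \left| \left< f, \mathcal P_{P_E} T_{\lambda'_k} g \right>_{L^2(\RR^d)} \right|^2
= \sum_{k \in K} \left| \left< \mathcal P_{P_E} f, T_{\lambda'_k} g \right>_{L^2(\RR^d)} \right|^2
= \sum_{k \in K} \left| \left< \hat f, e_{\lambda'_k} \chi_E \right>_{L^2(E)} \right|^2 .
\end{equation*}
Since $\{e_{\lambda'_k}\}_{k \in K}$ is, by hypothesis, a frame of $L^2(E_\delta)$ and $E \subseteq E_\delta$, the restricted family $\{e_{\lambda'_k}\chi_E\}_{k\in K}$ is a frame of $L^2(E)$: indeed $\|\hat f\|_{L^2(E)} = \|\hat f \chi_E\|_{L^2(E_\delta)}$ for $f \in P_E$, the Bessel bound passes to the subspace trivially, and the lower bound survives because functions supported in $E$ form a closed subspace of $L^2(E_\delta)$ on which the frame inequality still holds. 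Hence the displayed sum is comparable to $\|\hat f\|_{L^2(E)}^2 = \|f\|_{P_E}^2$, which is exactly the statement that $\{\mathcal P_{P_E} T_{\lambda'_k} g\}_{k\in K}$ is a frame for $P_E$, i.e. $\{T_{\lambda'_k} g\}$ is an outer frame for $P_E$ in the sense of the definition in Section~2.

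The only genuinely delicate point — which I would state carefully rather than gloss over — is the claim that the restriction of an $L^2(E_\delta)$-frame of exponentials to $L^2(E)$ remains a frame of $L^2(E)$. This is not true for arbitrary frames restricted to arbitrary subspaces, but it holds here because $L^2(E)$ embeds into $L^2(E_\delta)$ as the subspace $\widetilde{L^2(E)}^{(E_\delta)}$ of functions vanishing off $E$, and for $g \in \widetilde{L^2(E)}^{(E_\delta)}$ one has $\langle g, e_{\lambda'_k}\chi_{E_\delta}\rangle_{L^2(E_\delta)} = \langle g, e_{\lambda'_k}\chi_{E}\rangle_{L^2(E)}$ together with $\|g\|_{L^2(E_\delta)} = \|g\|_{L^2(E)}$, so both frame inequalities descend verbatim. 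Alternatively, and perhaps more cleanly, one can skip this step entirely by noting that $\hat g \chi_{E_\delta} = \hat g$ and applying Proposition~\ref{framseq}(1)–(2) with $\varphi = \hat g$ and $\psi_k = e_{\lambda'_k}\chi_{E_\delta}$: since $\operatorname{supp}(\hat g) \cap E_\delta \supseteq E$ and $\hat g$ equals $1$ (hence is bounded above and below by positive constants) precisely on $E$, one concludes that $\{\hat g \, e_{\lambda'_k}\}$ is a frame sequence spanning $\widetilde{L^2(E)}^{(E_\delta)} \cong L^2(E)$, and then transport back to $P_E$ by the Fourier transform exactly as in the proof of Theorem~\ref{pe}. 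I would present the argument via this second route, since it reuses the machinery of Section~3 with no new estimates.
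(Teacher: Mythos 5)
Your first argument --- computing $\widehat{\mathcal P_{P_E}T_{\lambda'_k}g}=e_{\lambda'_k}\hat g\chi_E=e_{\lambda'_k}\chi_E$ and reducing the outer-frame inequalities to the statement that $\{e_{\lambda'_k}\chi_E\}_{k\in K}$ is a frame for $L^2(E)$ --- is correct and is essentially the paper's own proof; you even make explicit the restriction-from-$E_\delta$-to-$E$ step that the paper glosses over (and the lower bound alternatively follows at once from $\Lambda\subseteq\Lambda'$ together with the hypothesis that $\{e_{\lambda_k}\}_{k\in K}$ is already a frame for $L^2(E)$). However, the ``second route'' that you say you would actually present is broken. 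With $\varphi=\hat g$ and $E_\delta$ playing the role of $E$, Proposition \ref{framseq} works with $F=\mathrm{supp}(\hat g)\cap E_\delta$, and this set is strictly larger than $E$: $\hat g$ is a continuous bump equal to $1$ on $E$ and $0$ off $E_\delta$, so $F$ contains the transition region where $0<\hat g<1$. Consequently part (1) gives $\overline{\mathrm{span}}\{\hat g\,e_{\lambda'_k}\}=\widetilde{L^2(F)}^{(E_\delta)}$, which is not $\widetilde{L^2(E)}^{(E_\delta)}$, and part (2) shows that $\{\hat g\,e_{\lambda'_k}\}$ is \emph{not} a frame sequence, because $|\hat g|$ has no positive lower bound on $F$. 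This is precisely the content of the paper's remark immediately preceding the proposition (the translates of $g$ do not form a frame sequence); the outer-frame formulation exists exactly because the frame-sequence route is unavailable. Keep the first argument and drop the second.
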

\begin{proof} Let $\mathcal{P}$ denote the orthogonal projector onto $P_E.$
For  $f\in P_E$ we have,
$$<\mathcal{P} \left(T_{\lambda'_k}g \right),f> = <T_{\lambda'_k}g, \mathcal{P} f> = <T_{\lambda'_k}g,f> = $$
$$= <e_{\lambda'_k}\hat{g},\hat{f}>
=\int_{\RR^d} (\hat{g} \overline{\hat{f}})  e_{\lambda'_k} =\overline{ \int_E \hat{f} \;\overline{e_{-\lambda'_k}}}$$

Since supp$(\hat f) \subset E$ and $\{e_{-\lambda'_k}\}_k$ forms a frame of $L^2(E)$ we have that
$\{\mathcal{P} ( T_{\lambda'_k}g )\}_{k\in K}$ forms a frame for $P_E$.
\end{proof}

The next result is about properties that are preserved under the action of convolution.
Convolution is of particular interest in applications for linear time-invariant filters.
\begin{proposition}
Let $\Lambda=\{\lambda_k\}_{k\in K}\subseteq\RR^d$ such that
$\{e_{\lambda_k}\}_{k \in K}$ is a frame for $L^2(E).$ Let
$f, g \in P_E.$ Then

\begin{enumerate}

\item \label{b1} If $\{T_{\lambda_k} f\}_{k \in K}$ is a Bessel sequence in $L^2\left(\RRd\right),$  and $\{T_{\lambda_k} g\}_{k \in K}$
is a Bessel sequence in $L^2\left(\RRd\right),$ then $\{T_{\lambda_k} (f
\ast g)\}_{k \in K}$ is a Bessel sequence in
$L^2\left(\RRd\right).$

\item \label{f1}If $\{T_{\lambda_k} f\}_{k \in K}$ is a frame for $P_E$ and $\{T_{\lambda_k} g\}_{k \in K}$
is a frame for $P_E,$ then $\{T_{\lambda_k} (f \ast
g)\}_{k \in K}$  is a frame for $P_E.$

\item \label{fs1} If $\{T_{\lambda_k} f\}_{k \in K}$ is a frame sequence in $L^2\left(\RRd\right),$  and $\{T_{\lambda_k} g\}_{k \in K}$ is a frame sequence in
$L^2\left(\RRd\right),$  then $\{T_{\lambda_k} (f \ast
g)\}_{k \in K}$ is a frame sequence in
$L^2\left(\RRd\right).$

\item \label{f2} If $\{T_{\lambda_k} f\}_{k \in K}$ is a frame for
$P_E$ and $\{T_{\lambda_k} (f \ast g)\}_{k \in K}$ is a frame
for $P_E,$ then $\{T_{\lambda_k} g\}_{k \in K}$ is a frame for
$P_E$.

\item \label{fs2}If $\{T_{\lambda_k} f\}_{k \in K}$ is a frame sequence in $L^2\left(\RRd\right)$
 and $\{T_{\lambda_k} (f \ast g)\}_{k \in K}$ is a frame sequence in $L^2\left(\RRd\right),$ then
$\{T_{\lambda_k} g\}_{k \in K}$ is a frame sequence in
$L^2\left(\RRd\right).$

\item\label{b2} Let $\{T_{\lambda_k} (f \ast g)\}_{k \in K}$ be a Bessel sequence in
 $L^2\left(\RRd\right).$ If there exists $C > 0$ such that
$\left| \hat f (\omega) \right| \geq C$ a.e., then  $\{T_{\lambda_k}
g\}_{k \in K}$ is a Bessel sequence in
$L^2\left(\RRd\right).$
\end{enumerate}
\end{proposition}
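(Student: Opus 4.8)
The unifying observation is the Fourier-side identity $\widehat{T_{\lambda_k}(f*g)}=e_{\lambda_k}\,\hat f\,\hat g$, so on $P_E$ the family $\{T_{\lambda_k}(f*g)\}$ corresponds, via the (isometric) Fourier transform, to $\{e_{\lambda_k}\,\hat f\,\hat g\,\chi_E\}$ in $L^2(E)$. Each of the six statements is then a matter of feeding the multiplier $\varphi=\hat f$, $\varphi=\hat g$, or $\varphi=\hat f\hat g$ into the results of Section~\ref{sec:framesmultipl}, combined with Theorem~\ref{pe}. The plan is to translate each claim into the language of multipliers, apply the corresponding proposition, and note that the product of two bounded (resp.\ bounded-below) functions is bounded (resp.\ bounded below).

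First I would set up notation: since $f,g\in P_E$, Theorem~\ref{pe}(1) says $\{T_{\lambda_k}f\}$ Bessel $\iff\exists B_f$ with $|\hat f|\le B_f$ a.e., and similarly for $g$; Theorem~\ref{pe}(2) says $\{T_{\lambda_k}f\}$ is a frame for $P_E$ $\iff\exists A_f,B_f>0$ with $A_f\le|\hat f|\le B_f$ a.e.\ on $E$; and Theorem~\ref{pe}(3) gives the analogous statement on ${\rm supp}\,\hat f$ for the frame-sequence case. For (\ref{b1}): from $|\hat f|\le B_f$, $|\hat g|\le B_g$ we get $|\widehat{f*g}|=|\hat f\hat g|\le B_fB_g$ a.e., and since $f*g\in P_E$ (its Fourier transform is supported in $E$, being a product of functions supported in $E$), Theorem~\ref{pe}(1) applied to $f*g$ gives the Bessel property. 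For (\ref{f1}): on $E$ we have $A_fA_g\le|\hat f\hat g|\le B_fB_g$ a.e., so Theorem~\ref{pe}(2) applied to $f*g$ yields a frame for $P_E$. For (\ref{fs1}): here one must be slightly careful because ${\rm supp}\,\widehat{f*g}={\rm supp}(\hat f\hat g)$ can be strictly smaller than ${\rm supp}\,\hat f\cap{\rm supp}\,\hat g$; but on ${\rm supp}(\hat f\hat g)$ both $|\hat f|$ and $|\hat g|$ are bounded above and below by the frame-sequence hypotheses (their essential lower bounds on their respective supports transfer to the smaller common support), so $|\hat f\hat g|$ is too, and Theorem~\ref{pe}(3) applies. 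For (\ref{f2}): apply Proposition~\ref{conv} (or rather its underlying mechanism) on the Fourier side — we know $|\hat f|$ is bounded above and below on $E$, and $\{e_{\lambda_k}\hat f\hat g\}$ is a frame of $L^2(E)$, so dividing by $\hat f$ (legitimate since $|\hat f|\ge A_f$) shows $\{e_{\lambda_k}\hat g\}$ is a frame of $L^2(E)$, i.e.\ $\{T_{\lambda_k}g\}$ is a frame for $P_E$; alternatively quote Proposition~\ref{fr} with $\varphi=1/\hat f$. For (\ref{fs2}): same division trick, now on ${\rm supp}\,\widehat{f*g}$, using that $|\hat f|$ is bounded below there and invoking Proposition~\ref{framseq}. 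For (\ref{b2}): $\{T_{\lambda_k}(f*g)\}$ Bessel gives $|\hat f\hat g|\le B$ a.e.\ on $E$ by Theorem~\ref{pe}(1); since $|\hat f|\ge C$ a.e., we get $|\hat g|\le B/C$ a.e.\ on $E$, hence a.e., so $\{T_{\lambda_k}g\}$ is Bessel by Theorem~\ref{pe}(1) again.

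The only genuinely delicate points are bookkeeping ones: (i) verifying that $f*g\in P_E$, which follows from $\widehat{f*g}=\hat f\hat g$ and ${\rm supp}(\hat f\hat g)\subseteq{\rm supp}\,\hat f\cap{\rm supp}\,\hat g\subseteq E$; and (ii) in parts (\ref{fs1}) and (\ref{fs2}), keeping straight the distinction between ${\rm supp}\,\hat f$, ${\rm supp}\,\hat g$, and ${\rm supp}(\hat f\hat g)$, and checking that the lower bounds for $|\hat f|$ and $|\hat g|$ — guaranteed by Theorem~\ref{pe}(3) on their own supports — do survive on the possibly smaller set ${\rm supp}(\hat f\hat g)$, which they do because that set is contained in each of ${\rm supp}\,\hat f$ and ${\rm supp}\,\hat g$. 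I expect this support-tracking in (\ref{fs1})--(\ref{fs2}) to be the main (mild) obstacle; everything else is a direct citation of Theorem~\ref{pe}, Proposition~\ref{fr}, Proposition~\ref{conv}, Proposition~\ref{bessel}, and Proposition~\ref{framseq}, together with the trivial fact that products and quotients of functions bounded away from $0$ and $\infty$ remain bounded away from $0$ and $\infty$.
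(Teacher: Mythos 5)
Your proposal is correct and follows essentially the same route as the paper: reduce everything to the Fourier side via $\widehat{T_{\lambda_k}(f*g)}=e_{\lambda_k}\hat f\hat g$, invoke Theorem~\ref{pe} to convert each frame/Bessel/frame-sequence property into two-sided (or one-sided) bounds on $|\hat f|$, $|\hat g|$, $|\hat f\hat g|$, and then multiply or divide these bounds. Your extra care with the supports in parts (\ref{fs1})--(\ref{fs2}) is a point the paper dispatches with ``analogous,'' so if anything your write-up is slightly more explicit than the original.
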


\begin{proof}

First observe that $f\ast g \in P_E.$

\ref{b1}. By Theorem~\ref{pe} there exist $B_1, B_2
> 0$ such that $\left| \hat f (\omega) \right| \le B_1 $ a.e. and
$\left| \hat g (\omega) \right| \le B_2$ a.e. Since $\widehat{f\ast
g}= \hat f \hat g,$ we have that $\left| \widehat{f\ast g} (\omega)
\right| \le B_1 B_2 $ a.e. and the result follows.

\medskip

Part \ref{f1} and part \ref{fs1} can be proved analogously.

\medskip

\ref{f2}. By Theorem~\ref{pe},

$A_1 \le \left| \hat f (\omega) \right| \le B_1 $ for almost all
$\omega \in E$ and $A_2 \le \left| \hat f \hat g (\omega) \right|
\le B_2 $ for almost all $\omega \in E.$ Hence

$$\frac{A_2}{B_1} \le \left| \hat g (\omega) \right|
\le \frac{B_2}{A_1} \text{ for almost all } \omega \in E.$$

\medskip

The proofs of  \ref{fs2}. and \ref{b2}. are analogous.

\end{proof}

The following proposition gives necessary and sufficient conditions in order that the  union of frame-sequences  of irregular translations
 is  a frame sequence. The sufficient condition was proved first in \cite{ACM04}. We include a proof here for completeness.

\begin{proposition}
Let $\{E_j\}_{j\in J}$ be a family of subsets of bounded subsets of $\RRd$ such
that $h_j \in P_{E_j}$ for all $j\in J.$ Assume that
$\{e_{\lambda_k}\chi_{E_j}\}_{k \in K}$ is a frame for
$L^2(E_j)$ with frame bounds $m_j$ and $M_j$ for every $j\in J.$ If
$m=\inf_j m_j > 0$ and $M=\sup_j M_j < +\infty,$ then

$\{T_{\lambda_k}h_j\}_{k \in K, \, j \in J}$ is a frame
for $P_{\bigcup_{j\in J}E_j}$ if and only if there exist constants
$0<p\leq P$ such that $$p\leq \sum_{j\in J}|\hat{h_j}(w)|^2 \leq
P\,\,\,\,\,\,\,\,\, a.e. \, in \,\,\,\,\bigcup_{j\in J}E_j.$$
\end{proposition}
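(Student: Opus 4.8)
The plan is to reduce the statement to a computation in the Fourier domain, exactly as in the proofs of Theorem~\ref{pe} and Proposition~\ref{framseq}, and then apply the multiplication results of Section~\ref{sec:framesmultipl}. First I would set $E := \bigcup_{j\in J} E_j$ and pass to the Fourier transform: for $f\in P_E$ we have, by Plancherel and the support condition $\operatorname{supp}\hat f\subseteq E_j$ whenever we pair against $h_j$,
\[
\sum_{j\in J}\sum_{k\in K}\bigl|\langle f, T_{\lambda_k} h_j\rangle\bigr|^2
=\sum_{j\in J}\sum_{k\in K}\bigl|\langle \hat f, e_{\lambda_k}\hat h_j\rangle_{L^2(E_j)}\bigr|^2,
\]
and $\|f\|_{P_E}=\|\hat f\|_{L^2(E)}$. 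So the claim is equivalent to: $\{e_{\lambda_k}\hat h_j\}_{k\in K,\,j\in J}$ is a frame for $L^2(E)$ iff $p\le \sum_j |\hat h_j|^2\le P$ a.e.\ on $E$.

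\medskip

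For the sufficiency direction, fix $g\in L^2(E)$ and, for each $j$, apply the lower and upper frame inequalities for $\{e_{\lambda_k}\chi_{E_j}\}_k$ on $L^2(E_j)$ to the function $g\,\overline{\hat h_j}$ (which lies in $L^2(E_j)$ because $\hat h_j$ is supported in $E_j$ and, by the hypothesis that $\{T_{\lambda_k}h_j\}_k$ pairs against $P_{E_j}$ the way Theorem~\ref{pe} needs, $\hat h_j\in L^\infty$; one gets this boundedness from the upper bound $P$ on $\sum_j|\hat h_j|^2$). This yields
\[
m_j\int_{E_j}|g|^2|\hat h_j|^2
\;\le\;\sum_{k\in K}\bigl|\langle g\,\overline{\hat h_j}, e_{\lambda_k}\rangle_{L^2(E_j)}\bigr|^2
\;\le\; M_j\int_{E_j}|g|^2|\hat h_j|^2 .
\]
Summing over $j$, using $m\le m_j$, $M_j\le M$, and the pointwise bounds $p\le\sum_j|\hat h_j|^2\le P$ a.e.\ on $E$, gives $m\,p\,\|g\|_{L^2(E)}^2\le \sum_{j,k}|\langle g, e_{\lambda_k}\hat h_j\rangle|^2\le M\,P\,\|g\|_{L^2(E)}^2$, which is the frame inequality. (Interchanging the sum over $j$ and the integral is justified by Tonelli, everything being nonnegative.)

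\medskip

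For the necessity direction I would argue by contradiction against either pointwise bound, following the template of Proposition~\ref{fr} part~1. Suppose no upper bound $P$ exists: then for each $s\in\NN$ the set $E^{(s)}:=\{\omega\in E:\sum_j|\hat h_j(\omega)|^2>s\}$ has positive measure; take $g_s$ to be the normalized indicator $|E^{(s)}|^{-1/2}\chi_{E^{(s)}}$. Using the \emph{upper} frame bound $M_j$ for each $\{e_{\lambda_k}\chi_{E_j}\}_k$ one only gets an upper estimate, so instead I use that $\{e_{\lambda_k}\chi_{E_j}\}_k$ being a frame with lower bound $m_j\ge m>0$ forces, via the same computation as above but now with the lower inequality reversed into the estimate we want, $\sum_{j,k}|\langle g_s, e_{\lambda_k}\hat h_j\rangle|^2\ge m\int_{E^{(s)}}|g_s|^2\sum_j|\hat h_j|^2\ge m\,s\to\infty$, contradicting the upper frame bound of the combined system. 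Symmetrically, if no lower bound $p>0$ exists, put $E_{(n)}:=\{\omega\in E:\sum_j|\hat h_j(\omega)|^2<1/n\}$, which has positive measure for all $n$, normalize $g_n:=|E_{(n)}|^{-1/2}\chi_{E_{(n)}}$, and estimate from above using the upper frame bounds $M_j\le M$: $\sum_{j,k}|\langle g_n,e_{\lambda_k}\hat h_j\rangle|^2\le M\int_{E_{(n)}}|g_n|^2\sum_j|\hat h_j|^2\le M/n\to 0$, contradicting the lower frame bound of the combined system. This also confirms a posteriori that each $\hat h_j$ is essentially bounded, so the pairings make sense.

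\medskip

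The main obstacle I anticipate is purely bookkeeping rather than conceptual: one must be careful that the function $g\,\overline{\hat h_j}$ really is admissible as an argument of the frame inequality for $L^2(E_j)$ — i.e.\ that it lies in $L^2(E_j)$ and that the frame inequalities for the exponentials restricted to $E_j$ combine correctly when summed over $j$ — and that in the contradiction arguments the "bad" sets $E^{(s)}$, $E_{(n)}$ are measured inside $E=\bigcup_j E_j$ while the exponential frames live on the individual $E_j$'s. Handling the overlaps of the $E_j$ correctly (a point $\omega$ may lie in several $E_j$, and $\sum_j|\hat h_j(\omega)|^2$ sums over exactly those $j$ with $\omega\in E_j$) is the only place where one needs to be attentive; once the sums and integrals are interchanged by Tonelli, the rest is the routine adaptation of Proposition~\ref{fr}.
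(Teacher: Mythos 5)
Your proof follows the paper's argument essentially step for step: the same reduction via Plancherel to the system $\{e_{\lambda_k}\chi_{E_j}\hat h_j\}$ acting on $L^2(\bigcup_j E_j)$, the same Tonelli interchange $\sum_j\int|\hat h_j|^2|\hat f|^2=\int\bigl(\sum_j|\hat h_j|^2\bigr)|\hat f|^2$ for sufficiency, and the same normalized-indicator contradiction argument (in the style of Proposition~\ref{fr}) for necessity. You even spell out the half of the necessity argument that the paper dismisses as ``analogous.''

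There is one small technical step that, as written, can fail. Each $E_j$ is bounded, but $J$ may be infinite, so $E=\bigcup_{j\in J}E_j$ --- and hence the bad sets $E^{(s)}$ and $E_{(n)}$ --- may have \emph{infinite} Lebesgue measure. In that case the normalized indicator $|E^{(s)}|^{-1/2}\chi_{E^{(s)}}$ is not defined (and $\chi_{E^{(s)}}\notin L^2$), so your test functions do not exist. The paper repairs exactly this point by writing $E_n=\bigcup_{k}(E_n\cap C_k)$ with $C_k=\{x: k-1\le\|x\|<k\}$, choosing some $A_n=E_n\cap C_{k_0}$ with $0<|A_n|<\infty$, and normalizing the indicator of $A_n$ instead; the rest of your computation goes through verbatim on $A_n$ since the pointwise bound $\sum_j|\hat h_j|^2<1/n$ (resp.\ $>s$) still holds there. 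With that one-line fix your argument coincides with the paper's proof.
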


\begin{proof}

$\Longleftarrow$)

Let $f\in P_{\bigcup_{j\in J}E_j}.$

\begin{equation*}
\begin{split}
\sum_{k \in K, \, j \in J} &\left|\langle T_{\lambda_k}h_j,
f \rangle_{L^2(\RRd)}\right|^2 \\
& =\sum_{k \in K, \, j \in
J}\left|\langle e_{\lambda_k}\chi_{E_j}\hat{h_j},\hat{f}
\rangle_{L^2(\RRd)}\right|^2 = \sum_{k \in K, \, j \in J}\left|\langle
e_{\lambda_k}\chi_{E_j}\hat{h_j},\hat{f} \rangle_{L^2(E_j)}\right|^2\\
& =
\sum_{j \in J}\sum_{k \in K}\left|\langle
e_{\lambda_k}\chi_{E_j},\overline{\hat{h_j}}\hat{f}
\rangle_{L^2(E_j)}\right|^2\leq \sum_{j \in
J}M_j\|\overline{\hat{h_j}}\hat{f}\|^2\\
& \leq M  \sum_{j \in J}\int_{\RRd}|\overline{\hat{h_j}}\hat{f}|^2 (\omega) d \omega =
M\int_{\RRd}\sum_{j \in J}|\hat{h_j}(\omega) |^2  |\hat{f}|^2 (\omega) d \omega  \\
& \leq
MP\int_{\RRd}|\hat{f}|^2 (\omega) d \omega =MP\|f\|^2.
\end{split}
\end{equation*}

The other inequality can be proved analogously.

\bigskip

$\Longrightarrow)$

Let $\{T_{\lambda_k}h_j\}_{k \in K, \, j \in J}$ be a frame of
$P_{\bigcup_{j\in J}E_j}$ and assume that for every $p>0$ there
exists a a set $U \subseteq \bigcup_{j\in J}E_j$ of positive
measure such that $\sum_{j\in J}|\hat{h_j}(w)|^2 < p$ for every $w
\in U.$

For $n\in \NN$ define $E_n= \{w\in \bigcup_{j\in J}E_j: \sum_{j\in
J}|\hat{h_j}(w)|^2< \frac{1}{n}\}.$ Let $C_k=\{x\in \RRd:
k-1\leq\|x\|<k\}.$ We can write $E_n=\bigcup_{k\in\NN}E_n\cap C_k.$
Since $|E_n|>0$, there exists a $k_0\in
\NN$ such that $|E_n\cap C_{k_0}|>0.$ Let $A_n=E_n\cap
C_{k_0}.$ Since $0<|A_n|<+\infty$ we define

\begin{equation}
f_n(t)=
\begin{cases}
\frac{1}{\sqrt{|A_n|}}& \text{for } t \in A_n \\
 0 & \text{otherwise}.
\end{cases}
\end{equation}

Clearly $f_n \in L^2(\bigcup_{j\in J}E_j)$ for every $n\in\NN.$

If $\alpha$ is the lower frame bound of $\{T_{\lambda_k}h_j\}_{\lambda_k
\in \Lambda, \, j \in J}$ and $M$ is the upper frame bound of
$\{e_{\lambda_k}\chi_{E_j}\}_{k \in K},$ then

\begin{align*}
\alpha& \leq \sum_{k \in K, \, j \in J}\left|\langle T_{\lambda_k}h_j,
f_n\rangle\right|^2
 =\sum_{k \in K, \, j \in
J}\left|\langle e_{\lambda_k}\chi_{E_j}\hat{h_j},\hat{f_n} \rangle
\right|^2\\
&= \sum_{k \in K, \, j \in J}\left|\langle
e_{\lambda_k}\chi_{E_j},\overline{\hat{h_j}}\hat{f_n} \rangle
\right|^2
\leq \sum_{j\in J}M_j \|f_n\hat{h}\|^2\\
& \leq M\sum_{j\in
J}\int_{A_n}|f_n\hat{h_j}|^2 (\omega) d \omega = \frac{1}{|A_n|}M\int_{A_n}\sum_{j\in
J}|\hat{h_j}|^2 (\omega) d \omega \\
& \leq \frac{M}{n}\rightarrow 0,
\end{align*}

which is a contradiction. The other frame inequality can be proved
analogously.

\end{proof}

\section*{Acknowledgements}
P. Balazs was partially supported by the WWTF project MULAC
('Frame Multipliers: Theory and Application in Acoustics;
MA07-025). The research of  C. Cabrelli and U. Molter is partially
supported by Grants UBACyT X149 and X028 (UBA), PICT 2006-00177
(ANPCyT), and PIP 112-200801-00398 (CONICET) and S. Heineken
acknowledges the support of the Intra-European Marie Curie
Fellowship (FP7 project PIEF-GA-2008-221090 carried out at NuHAG,
Faculty of Mathematics, University of Vienna).

The seeds of this work were developed during a visit of C.~Cabrelli, S.~Heineken and U.~Molter to the NuHAG during the special semester at the ESI (2005) in Vienna. We thank NuHAG and specially Hans Feichtinger for the hospitality during the visit.



%
%
%
%
%
%

\end{document}